\numberwithin{equation}{section}
\newtheorem{thm}{Theorem}[section]
\theoremstyle{plain}
\newtheorem*{thm*}{Theorem}
\newtheorem{prop}[thm]{Proposition}
\newtheorem*{prop*}{Proposition}
\newtheorem{lem}[thm]{Lemma}
\newtheorem{cor}[thm]{Corollary}
\theoremstyle{definition}
\newtheorem*{ex*}{Example}
\theoremstyle{definition}
\newtheorem{remark}[thm]{Remark}
\DeclareMathOperator{\iind}{Ind}
\DeclareMathOperator{\SL}{SL}
\DeclareMathOperator{\GL}{GL}
\DeclareMathOperator{\supp}{supp}
\DeclareMathOperator{\Prim}{Prim}
\newcommand{\ca}[1]{\mathcal{#1}}
\newcommand{\fH}[1]{\mathcal{H}_{#1}}
\newcommand{\ind}[3]{\iind_{#1}^{#2}{#3}}
\newcommand{\s}[0]{\Sigma}
\newcommand{\gus}[0]{G^{(0)}}
\newcommand{\gudu}[0]{G^{u}_{u}}
\newcommand{\gud}[2]{G^{#1}_{#2}}
\newcommand{\op}{\pi_{\omega}}
\begin{document}
	\title{THE ORBIT SPACES OF GROUPOIDS WHOSE $C^*$-ALGEBRAS ARE CCR}
			
	\author[DW v Wyk]{DANIEL W. VAN WYK}
	
	\address{Federal University of Santa Catarina \\ Department of Mathematics \\
		Campus Universitário Trindade CEP 88.040-900, Florianópolis-SC, Brazil} 
	
	\email{dwvanwyk79@gmail.com}

	\subjclass[2010]{``20, 37, 46, 47.''}

	\keywords{Groupoids, Operator Algebras, $C^*$-Algebras, Representations, Liminal, CCR}

	\begin{abstract}
		Let $G$ be second-countable locally compact Hausdorff groupoid
		with a continuous Haar system.  We show that if the groupoid $C^*$-algebra of $G$ is CCR then the orbits of $G$ are closed and the stabilizers of $G$ are CCR. In particular, we remove the assumption of amenability in a theorem of Clark.  
	\end{abstract}
	
	\maketitle

\section*{INTRODUCTION}
Let $\ca{K}(\fH{})$ denote the compact operators on a Hilbert space $\fH{}$.
A $C^*$-algebra $\ca{A}$ is called CCR, if for every irreducible representation
$\pi:\ca{A}\to B(\fH{\pi})$ we have $\pi(\ca{A})=\ca{K}(\fH{\pi})$, and 
 $\ca{A}$ is called GCR if, for every irreducible representation
$\pi$, we have $\pi(\ca{A})\supset\ca{K}(\fH{\pi})$.

In \cite[Theorem 6.1]{Cla07} Clark gives the following CCR characterization for 
$C^*$-algebras of groupoids: 

\begin{thm*}[Clark]
Let $G$ be a second-countable locally compact Hausdorff groupoid with a Haar system. 
Suppose that  all the stabilizers of $G$ are amenable. Then the full $C^*$-algebra $C^*(G)$ of $G$ is CCR if and only if the orbit space of $G$ is $T_{1}$ and the stabilizers of $G$ are CCR. 
\end{thm*}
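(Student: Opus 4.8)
The plan is to treat this as a statement about the primitive ideal structure of $C^*(G)$ and to run it through the dictionary between invariant subsets of $\gus$ and ideals of $C^*(G)$. I will use the standard $C^*$-criterion that a $C^*$-algebra is CCR if and only if it is GCR and its primitive ideal space is $T_1$ (equivalently, every primitive ideal is maximal), together with the elementary observation that the orbit space $\gus/G$ is $T_1$ exactly when every orbit is closed in $\gus$. Two structural inputs do the heavy lifting. First, the Muhly--Renault--Williams equivalence theorem: the reduction $G|_{[u]}$ of $G$ to an orbit is a transitive groupoid equivalent to the stabilizer group $S_u := G_u^u$, so that $C^*(G|_{[u]})$ is Morita equivalent to $C^*(S_u)$. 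Second, the generalized Effros--Hahn description of $\Prim C^*(G)$: under the amenability hypothesis on the stabilizers, every primitive ideal of $C^*(G)$ has the form $\iind_{S_u}^{G}(Q)$ for some unit $u$ and some primitive ideal $Q$ of $C^*(S_u)$. With these in place the whole theorem reduces to the single assertion that $\iind_{S_u}^{G}(Q)$ is a maximal ideal of $C^*(G)$ if and only if $[u]$ is closed and $Q$ is maximal in $C^*(S_u)$.

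For the \emph{if} direction, assume every orbit is closed and every $C^*(S_u)$ is CCR. A closed invariant set $F\subseteq\gus$ yields a quotient $C^*(G)\twoheadrightarrow C^*(G|_F)$ whose kernel is the ideal $C^*(G|_{\gus\setminus F})$ coming from the open complement; taking $F=[u]$ produces a quotient $C^*(G|_{[u]})$, which is Morita equivalent to the CCR algebra $C^*(S_u)$ and hence itself CCR. Since the orbits are closed they coincide with the quasi-orbits, so the Effros--Hahn/disintegration picture shows that every irreducible representation of $C^*(G)$ factors through $C^*(G|_{[u]})$ for a single orbit; being CCR, that quotient sends the representation onto the compacts, and therefore so does the original. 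Thus $C^*(G)$ is CCR.

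For the \emph{only if} direction, assume $C^*(G)$ is CCR, hence GCR. Being GCR forces the spectrum to be almost Hausdorff, so every point is locally closed, and pushing this through the continuous open surjection $\Prim C^*(G)\to\gus/G$ shows that every orbit is locally closed in $\gus$. A locally closed orbit realizes $C^*(G|_{[u]})$ as a subquotient of $C^*(G)$; since subquotients of CCR algebras are CCR and Morita equivalence preserves CCR, $C^*(S_u)$ is CCR, which is the stabilizer conclusion. To obtain closedness of orbits, suppose some $[u]$ were not closed. Local closedness makes $[u]$ open and dense in $\overline{[u]}$; as two disjoint nonempty open subsets of $\overline{[u]}$ cannot both be dense, not every orbit inside $\overline{[u]}$ can be dense, so there is a boundary orbit $[v]\subseteq\overline{[u]}$ with $\overline{[v]}\subsetneq\overline{[u]}$. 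The known order behaviour of induction with respect to orbit closures then yields a strict containment $\iind_{S_u}^{G}(Q)\subsetneq\iind_{S_v}^{G}(Q')$ of primitive ideals, so $\iind_{S_u}^{G}(Q)$ is not maximal and $\Prim C^*(G)$ fails to be $T_1$, contradicting CCR. Hence every orbit is closed, i.e.\ $\gus/G$ is $T_1$.

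The main obstacle is the Effros--Hahn exhaustion step: the claim that every primitive ideal of $C^*(G)$ is induced from a stabilizer is precisely the generalized Effros--Hahn conjecture for groupoids, and this is exactly where amenability of the stabilizers is used, both to identify the full and reduced $C^*$-algebras of $S_u$ and to guarantee that the induced representations are well behaved enough that induction recovers all of $\Prim C^*(G)$. The secondary technical burden is the order theory of induction --- making rigorous that shrinking the orbit closure enlarges the induced primitive ideal, so that maximality of $\iind_{S_u}^{G}(Q)$ simultaneously detects closedness of $[u]$ (via the quotient/ideal exact sequences for invariant sets) and maximality of $Q$ (via the Morita equivalence $C^*(G|_{[u]})\sim C^*(S_u)$) --- together with the local-closedness bookkeeping needed to pass between orbits, quasi-orbits, and the corresponding invariant-set ideals.
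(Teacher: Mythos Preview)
Note first that this statement is Clark's theorem, which the paper only \emph{cites} from \cite{Cla07} and does not reprove; the paper's own contribution is the stronger Theorem~\ref{mainCCRthm2}, with amenability removed. So the meaningful comparison is between your sketch and the paper's proof of Theorem~\ref{mainCCRthm}.

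Your route is genuinely different from the paper's. You work structurally, via the generalized Effros--Hahn description of $\Prim C^*(G)$ (which consumes the amenability hypothesis), Morita equivalence of $C^*(G|_{[u]})$ with $C^*(S_u)$, and an order argument on induced primitive ideals. The paper instead works analytically: for each $u$ it builds an explicit unitary equivalence between the restriction of $\ind{\gudu}{G}{\pi}$ to $C_0(\gus)\subset\mathcal{M}(C^*(G))$ and a multiplication representation $M_u$ on $L^2(G_u/\gudu,\fH{},\sigma_u)$ (Proposition~\ref{mainEquivReps}), then shows directly that multiplication representations over disjoint orbits share no equivalent subrepresentation (Lemma~\ref{lem:EqReps}). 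The contradiction for a non-closed orbit is obtained by a Fell-topology limit of stabilizers (Lemma~\ref{convInducedTrivialReps}) forcing $l^u\sim\ind{\gud{v}{v}}{G}{\pi}$ with $[u]\cap[v]=\emptyset$, hence $M_u\sim M_v$, which Lemma~\ref{lem:EqReps} forbids. Because this argument never invokes Effros--Hahn, it needs no amenability --- that is the whole point of the paper.

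Your \emph{only if} argument has a genuine gap at precisely the step you flag as the ``secondary technical burden.'' The assertion that ``the known order behaviour of induction with respect to orbit closures then yields a strict containment $\iind_{S_u}^{G}(Q)\subsetneq\iind_{S_v}^{G}(Q')$'' is the entire crux, and you have not justified either half. The \emph{containment} $\ker l^u\subseteq\ker l^v$ when $v\in\overline{[u]}\setminus[u]$ is available to you, but it is exactly Clark's continuity lemma \cite[Lemma~5.4]{Cla07} (this is where amenability is actually used in Clark's argument --- see the paper's introduction); you should invoke it, not gesture at unspecified order behaviour. \emph{Strictness} is the real problem: you need to know that irreducible representations induced from distinct orbits have distinct kernels, and this is not a formality. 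It requires showing that $l^u$ does not annihilate the ideal of $C^*(G)$ associated to the open invariant set $\gus\setminus\overline{[v]}\supset[u]$, which in turn needs either the quasi-orbit/ideal exact sequences worked out carefully or --- and this is telling --- exactly the multiplication-operator fact that the paper's Proposition~\ref{mainEquivReps} and Lemma~\ref{lem:EqReps} supply. A smaller issue: the ``continuous open surjection $\Prim C^*(G)\to\gus/G$'' you invoke does not exist as stated; the natural target is the quasi-orbit space, and getting local closedness of orbits from an almost-Hausdorff spectrum requires Ramsay's Mackey--Glimm dichotomy \cite{Ram90}, which you should cite rather than assume.
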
 

Clark's theorem  generalizes a CCR characterization 
for transformation group $C^*$-algebras of Williams \cite{Wil81} to groupoids. However, Gootman's analogous GCR characterization \cite{Goo73} for transformation group $C^*$-algebras does not assume amenable stabilizers. Williams deals with possibly non-separable transformations groups, and therefore assumes amenable stabilizers at points where the stabilizer map is discontinuous. The lack of 
an amenability assumption in Gootman's GCR characterization led Clark  to
conjecture that the amenability hypothesis in the groupoid case
is unnecessary \cite{Cla07}. In \cite{Wyk18} Clark's conjecture is show to be correct in the GCR case. In this paper we show that Clark's conjecture is also true in the CCR case. 

We point out that the techniques used in the GCR case (\cite{Wyk18}) are very different from the techniques we use in this paper. In \cite{Wyk18} we relied on the fact a $C^*$-algebra is GCR if and only if it is a type I $C^*$-algebra (\cite{Sak67}). We could therefore use von Neumann algebras and the theory of direct integrals by constructing a direct integral representation of the groupoid $C^*$-algebra and give a necessary condition for the direct integral representation to be type I. Every CCR $C^*$-algebra is GCR, but the converse does not hold in general. Hence the ``GCR - type I equivalence'' is not helpful in the CCR case.

Careful analysis of Clark's proofs shows the amenability hypothesis is used to show continuity of a function from the orbit space of the groupoid into the spectrum of the $C^*$-algebra of the groupoid (\cite[Lemma 5.4]{Cla07}). Continuity of this function is only used to show that if $C^*(G)$ is CCR, then (i) the orbit space is $T_1$ and (ii) the stabilizers are CCR. However, Clark uses the continuity of this function only indirectly to prove statement (ii), since statement (ii) is shown under the assumption that the orbits are closed (that is, that the orbit space is $T_1$). Hence, once we show that if $C^*(G)$ is CCR then the orbit space of $G$ is $T_1$, then (ii) will follow with exactly the same argument that Clark uses.

We obtained a broad outline for a CCR proof for transformations groups from Astrid an Huef, which she credited to Elliot Gootman. Before considering the groupoid case, we worked out the details in the transformation group case based on Gootman's outline and  Williams' proof for possibly non-separable transformation groups.   
However, even in the transformation group case we needed to make adjustments to make the arguments work. Specifically, we realized that we need to generalize to vector valued function spaces, because we consider induced representations other than those induced from trivial representations. Our CCR proof for groupoid $C^*$-algebras uses similar arguments to those that we used for transformation groups, which again required some work to generalize groupoids. To our knowledge no CCR proof has been published for second-countable transformation groups, that does not require amenability. Thus, our result is new even for transformation groups. 

The structure of this paper is as follows. In Section \ref{ch:Prelim} the necessary definitions and formulas needed are given. In Section \ref{sec:MultiOpsEq} our main goal is to prove Proposition \ref{mainEquivReps}, which shows the equivalence of induced representations of $C^*(G)$ to representations by multiplication operators of the $C^*$-algebra $C_0(\gus)$ of continuous functions that vanish at infinity on the unit space of $G$. To do this we work in the multiplier algebra of $C^*(G)$ and thus describe a *-homomorphism of $C_0(\gus)$ into this multiplier algebra.  Section \ref{sec:CCRmain} contains main result, Theorem \ref{mainCCRthm}, which crucially relies on Lemma \ref{lem:EqReps}, also presented in this section. Lemma \ref{lem:EqReps} will give us a desired contradiction to an equivalence of representations as multiplication operators, which is established in Theorem \ref{mainCCRthm}. Lastly, in Section \ref{chap:Examples} we give two examples. The first is an example of a groupoid with non-amenable stabilizers where it's associated $C^*$-algebra is GCR but not CCR. The second is an example of a groupoid with non-amenable stabilizers whose $C^*$-algebra is CCR.



\section{PRELIMINARIES} \label{ch:Prelim}
Throughout $G$ is a second-countable locally compact Hausdorff groupoid with unit space $\gus$, and a continuous Haar system $\{\lambda^u\}_{u\in\gus}$ (see \cite{Ren80} for these definitions). Let $r$ and $s$ denote the \textit{range} and \textit{source} maps, respectively, from $G$ onto $\gus$. Let $u\in\gus$. Then $G^u:=r^{-1}(u)$, $G_u:=s^{-1}(u)$ and the \textit{stabilizer} at $u$ is $\gudu:=r^{-1}(u)\cap s^{-1}(u)$. Note that each stabilizers is a group. 
For $x\in G$, the map $R(x):=(r(x),s(x))$ defines an 
equivalence relation $\sim$ on $\gus$. Let 
$[u]:=\{v\in\gus :u\sim v\}$ denote the \textit{orbit} of $u$. The \textit{orbit space} $\gus / G$
is the quotient space for this equivalence relation. 

Throughout $C_{c}(X)$ denotes the continuous compactly 
supported functions from the topological space $X$ into $\mathbb{C}$.
	If $f,g\in C_{c}(G)$, then
	$$f\ast g (x):=\int_{G}{f(y)g(y^{-1}x) \: d\lambda^{r(x)}(y)}$$
	and
	$$ f^{*}(x):= \overline{f(x^{-1})} ,$$
	define convolution and involution operations on $C_{c}(G)$, respectively.  
	With these operations $C_{c}(G)$ is a *-algebra. Let $\fH{}$ be a separable Hilbert space and $B(\fH{})$ the bounded linear operators on $\fH{}$.  A \textit{representation} of $C_{c}(G)$ is a *-homomorphism $\pi:C_{c}(G)\to B(\fH{})$	such that $||\pi(f)||\leq ||f||_I$, where $||f||_I$ is the I-norm on $C_{c}(G)$ (see \cite{Ren80} for the I-norm). Then $C^{*}(G)$ is the completion of $C_{c}(G)$ in the norm
$$||f||:=\{\sup||\pi(f)||:\pi \text{ is a representation of }C_{c}(G)\}.$$	
We assume all representations are non-degenerate.

Throughout we  induce representations which are irreducible. Following Ionescu and Williams' construction of induced representations \cite{IonWil09}, we induce from closed subgroupoids. Below we give the required details of what we need from \cite{IonWil09}. All our induced representations are from closed subgroups, which we view as subgroupoids. This view only matters when taking the integrated form of a representation of a subgroup, as the integrated form of a groupoid representation  has a modular function in the integrand whereas the group version does not (see (\ref{eq:IntergfromGeneral}) below). 

W now give the necessary formulas of induced representations that we will need. The reader is referred to \cite{IonWil09} for more details on induced representations. For the remainder of this section let $u\in \gudu$, let $ \phi,\psi\in C_{c}(G_u)$ and let $f\in C_{c}(G)$. Since $G_u$ is closed in $G$, every $\phi\in C_{c}(G_u)$ has an extension $f_\phi$ to $C_{c}(G)$. Then $C_{c}(G_u)$ is a right $C_c(\gudu)$-pre-Hilbert module with the inner product given by 
\begin{equation}\label{eq:ModuleInnerProd}
\langle \psi, \phi \rangle_{C_{c}(\gudu)}(t):=\int_{G}{\overline{\psi(\gamma)} 
	\:\phi(\gamma t)	\lambda_u(\gamma)}  = f_\psi^{\ast} \ast f_\phi, \nonumber				
\end{equation}
as long as we remember that the product $f_\psi^{\ast} \ast f_\phi$ is  restricted to $\gudu$.
For all $\gamma \in G_u$ the equation
\begin{equation*}\label{eq:AdjointActionOnModule}
f\cdot \phi (\gamma):=\int_{G}{f(\eta)\phi(\eta^{-1}\gamma)\: d\lambda^{r(\gamma)}(\eta)}=f\ast f_\phi
\end{equation*}
defines an action of $C_c(G)$ as adjointable operators on $C_{c}(G_u)$. Here the product $f\ast f_\phi$ is restricted to $G_u$. 
The completion $\overline{C_{c}(G_u)}$ of $C_{c}(G_u)$ gives a right  $\overline{C_{c}(G_u)}$-Hilbert module on which $C^*(G)$ acts a adjointable operators (\cite[Lemma 2.16]{MoritaEq}).

Suppose  $\omega:\gudu \to B(\fH{})$ is a representation 
on a Hilbert space $\fH{}$. Fix $a\in C_c(\gudu)$, $h,k\in\fH{}$ and let $\beta^u$ be a Haar measure on $\gudu$ with corresponding modular function $\Delta_u$. 
Let
$\op:C^*(\gudu)\to B(\fH{})$ denote the integrated from of $\omega$, that is,
\begin{equation} \label{eq:IntergfromGeneral}
 \pi_{\omega}(a)h:=\int_{\gudu} {a(r)\:\Delta(r)^{-1/2} \:\omega_rh\:d\beta^u(r)},   
\end{equation}
(see \cite[Proposition 1.7]{Ren80}).
Let $C_{c}(G_u)\odot \fH{}$ be the algebraic tensor product. Define an 
inner product on $C_{c}(G_u)\odot \fH{}$ by 
\begin{equation}\label{eq:InduedHilbertInner}
\left( \phi\otimes h \mid \psi\otimes k \right):=\left( \pi_{\omega}(\langle \psi,\phi\rangle_{_{C^*(\gudu)}})\: h \mid k \right),
\end{equation}
for all elementary tensors $\phi\otimes h,\psi\otimes k\in C_{c}(G_u)\odot \fH{}$.
Denote the completion of $C_{c}(G_u)\odot \fH{}$ with respect to this inner 
product by $\overline{C_{c}(G_u)\odot \fH{}}$. Define the induced representation 
$\ind{\gudu}{G}{\op}:C_{c}(G)\to B(\overline{C_{c}(G_u)\odot \fH{}})$ by 
$$ \ind{\gudu}{G}{\op}(f) (\phi\otimes h) := (f\cdot \phi)\otimes h.$$
By \cite[Proposition 2.66]{MoritaEq}, $ \ind{\gudu}{G}{\op}$ extends to 
give a representation of $C^*(G)$ as bounded linear 
operators on the Hilbert space $\overline{C_{c}(G_u)\odot \fH{}}$. 

Finally, any irreducible representation of a stabilizer induces to an irreducible representation of $C^*(G)$, \cite[Theorem 5]{IonWil09}.


\section{REPRESENTATIONS BY MULTIPLICATION OPERATORS EQUIVALENT TO INDUCED REPRESENTATIONS}  \label{sec:MultiOpsEq}
Let $\mathcal{M}(C^*(G))$ denote the multiplier algebra of $C^*(G)$, and let $C_0(\gus)$ denote the continuous functions on $\gus$ that vanish at infinity. For $u\in\gus$ and an irreducible representation $\pi$ of $C^*(\gudu)$ we let $L_u$ denote the extension of $\ind{\gudu}{G}{\pi}$ to $\mathcal{M}(C^*(G))$.
In this section we show that, for every $u\in\gus$, there is a representation $M_u$ of  $C_{0}(\gus)$ which acts as 
multiplication operators on an appropriate $L^2$-space. Moreover, there is a *-homomorphism $V:C_0(\gus)\to \mathcal{M}(C^*(G))$ such that $M_u$ is unitarily equivalent to $L_u\circ V$.

Fix $u\in \gus$. For $\gamma\in \gud{}{u}$, let 
$$\dot{\gamma}:=\gamma\gudu=\{\gamma t:t\in \gudu\}.$$ 
Then 
$$\gud{}{u}/\gudu:=\{\dot{\gamma}:\gamma\in\gud{}{u}\}$$
is a partition of $\gud{}{u}$ and we give $\gud{}{u}/\gudu$ the quotient topology. 

The following two lemmas are almost certainly well-known. We include their proofs for the reader's convenience.

\begin{lem}\label{OpenQuotientMap}
	Let $G$ be a locally compact Hausdorff groupoid. Fix $u\in\gus$.
	Then the quotient map $q:\gud{}{u} \to \gud{}{u}/\gudu$ is open. 
	
\end{lem}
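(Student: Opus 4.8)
The plan is to show the quotient map $q:\gud{}{u}\to\gud{}{u}/\gudu$ is open by verifying that $q(U)$ is open for every open $U\subseteq\gud{}{u}$. By the definition of the quotient topology, $q(U)$ is open in $\gud{}{u}/\gudu$ if and only if $q^{-1}(q(U))$ is open in $\gud{}{u}$. So the entire problem reduces to proving that the \emph{saturation} $q^{-1}(q(U))$ is open whenever $U$ is open.

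The next step is to identify the saturation explicitly. A point $\gamma\in\gud{}{u}$ lies in $q^{-1}(q(U))$ exactly when $\dot{\gamma}=\dot{\eta}$ for some $\eta\in U$, which happens precisely when $\gamma\in\eta\gudu$, i.e. $\gamma=\eta t$ for some $\eta\in U$ and $t\in\gudu$. Hence I would write
\begin{equation*}
q^{-1}(q(U))=\bigcup_{t\in\gudu}U\cdot t,
\end{equation*}
where $U\cdot t:=\{\eta t:\eta\in U\}$ is the right translate of $U$ by the fixed stabilizer element $t$. Since a union of open sets is open, it now suffices to show that each right translate $U\cdot t$ is open in $\gud{}{u}$.

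To see that right translation by a fixed $t\in\gudu$ is a homeomorphism of $\gud{}{u}$ onto itself, I would use the continuity of multiplication and inversion in the groupoid. Right multiplication by $t$ is well-defined on $\gud{}{u}=s^{-1}(u)$ because $s(t)=u$ forces $s(\eta t)=s(t)=u$ whenever $\eta\in s^{-1}(u)$ and $r(t)=u=s(\eta)$, so composability holds and the map lands back in $\gud{}{u}$. This map is continuous as a restriction of groupoid multiplication, and it has a continuous inverse given by right multiplication by $t^{-1}$ (which is again a stabilizer element since $\gudu$ is a group). Therefore $R_t:\eta\mapsto\eta t$ is a homeomorphism, so $U\cdot t=R_t(U)$ is open. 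This completes the argument.

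The only genuinely delicate point is the bookkeeping about composability and the source/range constraints: one must check that right multiplication by $t$ genuinely maps $\gud{}{u}$ into $\gud{}{u}$ and that the product $\eta t$ is defined for every $\eta\in\gud{}{u}$, which is exactly what $u\in\gudu$ (so that $r(t)=s(t)=u$) guarantees. Once composability is confirmed, the homeomorphism claim and the openness of the saturation are routine consequences of the standard continuity axioms for a topological groupoid.
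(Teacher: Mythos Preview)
Your proof is correct and follows essentially the same approach as the paper: both identify the saturation $q^{-1}(q(U))$ as the union $\bigcup_{t\in\gudu}U\cdot t$ and show each right translate is open by verifying that right multiplication by a fixed $t\in\gudu$ is a homeomorphism of $G_u$. Your treatment is slightly more careful about composability, but the argument is otherwise the same.
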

	\begin{proof}
	 Suppose that $V\subset\gud{}{u}$ is open. We claim that, for every fixed $a\in\gudu$, \newline
	 $Va:=\{\gamma a:\gamma\in V\}$ is open. To see this, define 
	 $$\phi_a(\gamma):=\gamma a,$$ 
	 for every $\gamma\in G_u$. Then, since multiplication is continuous
	 in $G$, it follows that $\phi_a$ is continuous. It is not hard to see that 
	 $\phi_a$ is bijective, and that $\phi_{a^{-1}}(\gamma):=\gamma a^{-1}$
	 is a continuous inverse for $\phi_a$. Thus $\phi_a$ is a homeomorphism
	 of $G_u$, and $Va=\phi_a(V)$ is open as claimed. 
	 
	 Since $Va$ is open, we also have that
	 \begin{equation}
	   q^{-1}(q(V))=\bigcup_{a\in\gudu}Va. \label{eq:openUnion}
	\end{equation}	
	 is open. By the definition of a quotient topology $q(V)$ is open in 
	 $\gud{}{u}/\gudu$ if and only if $q^{-1}(q(V))$ is open in $\gud{}{u}$. Thus, 
	  Equation (\ref{eq:openUnion}) shows that 
	 $q(V)$ is open in $\gud{}{u}/\gudu$, completing the proof.
	\end{proof}

\begin{lem} \label{quotientHausdorff}
	Let $G$ be a second-countable locally compact Hausdorff groupoid. Fix $u\in\gus$.
	Then $\gud{}{u}/\gudu$ is second-countable locally compact  and Hausdorff in the
	quotient topology.
\end{lem}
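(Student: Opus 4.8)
The plan is to transfer all three properties from $\gud{}{u}$ to the quotient along the map $q$, relying on the fact that $q$ is open (Lemma \ref{OpenQuotientMap}), continuous, and surjective. First I would observe that $\gud{}{u}=s^{-1}(u)$ is closed in $G$, since $s$ is continuous and $\{u\}$ is closed in the Hausdorff space $G$; hence $\gud{}{u}$ is itself second-countable, locally compact, and Hausdorff as a subspace of $G$.

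For second-countability, the images under the open continuous surjection $q$ of a countable base for $\gud{}{u}$ form a countable base for the quotient, so $\gud{}{u}/\gudu$ is second-countable. For local compactness, given a point $\dot\gamma$ I would pick a representative $\gamma$ and a compact neighborhood $K$ of $\gamma$ in $\gud{}{u}$; then $q(K)$ is compact as the continuous image of a compact set, and it contains the open set $q(\mathrm{int}\,K)$, which is a neighborhood of $\dot\gamma$ because $q$ is open. Thus $q(K)$ is a compact neighborhood of $\dot\gamma$, and local compactness follows.

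The main work is Hausdorffness, for which I would invoke the standard criterion that a quotient by an open quotient map is Hausdorff if and only if the graph of the equivalence relation
$$R:=\{(\gamma,\gamma')\in\gud{}{u}\times\gud{}{u}:\dot\gamma=\dot{\gamma'}\}$$
is closed in $\gud{}{u}\times\gud{}{u}$. The key observation is that $R$ is exactly the ``equal range'' relation: if $\gamma'=\gamma t$ with $t\in\gudu$, then $r(\gamma')=r(\gamma)$, and conversely if $r(\gamma)=r(\gamma')$ while $s(\gamma)=s(\gamma')=u$, then $t:=\gamma^{-1}\gamma'$ is defined and lies in $\gudu$ with $\gamma t=\gamma'$. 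Hence
$$R=\{(\gamma,\gamma'):r(\gamma)=r(\gamma')\}=(r\times r)^{-1}(\Delta_{\gus}),$$
where $\Delta_{\gus}$ denotes the diagonal of $\gus\times\gus$ and $r\times r$ is the map $(\gamma,\gamma')\mapsto(r(\gamma),r(\gamma'))$. Since $r$ is continuous and $\gus$ is Hausdorff, $\Delta_{\gus}$ is closed, so $R$ is closed, and the criterion yields that $\gud{}{u}/\gudu$ is Hausdorff.

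I expect the only genuine subtlety to be the verification that $R$ coincides with the equal-range relation, together with recalling the open-map criterion for Hausdorffness; once $R$ is recognized as the preimage of the diagonal, its closedness is immediate from the Hausdorffness of the unit space, and the remaining two properties are routine consequences of $q$ being an open continuous surjection.
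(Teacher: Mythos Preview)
Your proof is correct and takes a genuinely different, more elementary route than the paper. The paper observes that $G_u$ is a proper right $\gudu$-space under multiplication and then cites general results from \cite{CrossedProd}: Lemma~3.35 for second-countability and local compactness of the orbit space, and Corollary~3.43 (properness of the action implies Hausdorff quotient). You instead argue each property directly from the fact that $q$ is an open continuous surjection (Lemma~\ref{OpenQuotientMap}), and for Hausdorffness you invoke the closed-graph criterion and make the key identification $R=(r\times r)^{-1}(\Delta_{\gus})$ explicit. This identification is essentially what underlies the properness claim in the paper's proof (the map $(\gamma,t)\mapsto(\gamma,\gamma t)$ is a homeomorphism onto $R$, and $R$ closed is exactly what makes the action proper), but the paper asserts properness without spelling this out. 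Your argument is therefore more self-contained and transparent; the paper's is shorter but relies on external citations and an unverified properness assertion.
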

\begin{proof}
Since $G_u$ is closed in $G$, it follows that $G_u$ is second-countable locally 
compact and Hausdorff. It is not hard to see that $G_u$ is a proper right $\gudu$-space with the groupoid operation for the action, and that the
orbit space of this $\gudu$-space is precisely $\gud{}{u}/\gudu$. Then   $\gud{}{u}/\gudu$  is second-countable and locally compact by \cite[Lemma 3.35]{CrossedProd}. 
Moreover, since the action is proper, it follows from \cite[Corollary 3.43]{CrossedProd} that  $G_u/\gudu$ is Hausdorff.
\end{proof}

In Lemma \ref{Bruhat} below we combine and restate \cite[Lemma 3.2 and Lemma 3.3]{Cla07} as they form an important part of this section.

\begin{lem} \label{Bruhat}
Fix $u\in\gus$. Let $\beta$ be a Haar measure on $\gudu$ with modular function 
$\Delta_{u}$. Then
	\begin{enumerate}
	\item[(i)] there is a a non-negative, continuous function $b:G_u\to\mathbb{R}$ such that
	for any compact subset $K\subset G_u $, the support of $b$ and 
	$K\gudu:=\{\gamma t:\gamma\in K,t\in\gudu\}$
	have compact intersection, and
	$$\int_{\gudu}{b(\gamma t)\: d\beta(t)}=1,  $$
	for all $\gamma\in G_u$; 
	\item[(ii)] the function  
	$$ Q(f)(\dot{\gamma}):=\int_{\gudu}{f(\gamma t)} \:d\beta(t) $$
	defines a surjection from $C_{c}(G)$ onto $C_{c}(G_u/\gudu)$; 
	\item[(iii)] there is a continuous 
	function $\rho_{u}:\gud{}{u} \to (0,\infty)$
	such that, for $\gamma\in \gud{}{u}$ and $t\in \gudu$,
	$$\rho_u(\gamma t)=\rho_u(\gamma)\:\Delta_{u}(t);$$ and
	\item[(iv)] there is a Radon measure $\sigma_{u}$ on $G_u/\gudu$ such that
	$$ \int_{G_u}{f(\gamma)\rho_u({\gamma}) \: d\lambda_u(\gamma)}
	=\int_{G_u /\gudu}{\int_{\gudu}{f(\gamma t) \:d\beta(t)}\: d\sigma_{u}(\dot{\gamma})}, $$
	for all $f\in C_{c}(G_u)$.
	\end{enumerate}
\end{lem}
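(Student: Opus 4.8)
The plan is to realise $G_u$ as a free and proper right $\gudu$-space whose quotient $G_u/\gudu$ is second-countable, locally compact and Hausdorff and whose quotient map $q$ is open, all of which is supplied by Lemmas \ref{OpenQuotientMap} and \ref{quotientHausdorff}, and then to run the classical homogeneous-space machinery (Bruhat approximate cross-section, orbit averaging, rho-function, Weil integration formula) in this fibre. I would prove the four parts in the order stated, since (ii) and (iii) both rest on (i), and (iv) rests on (ii) and (iii). Throughout I take $\beta$ to be a left Haar measure on $\gudu$, so that $\Delta_u$ is governed by the usual inversion and right-translation identities.

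For (i) I would first build a non-negative $c\in C(G_u)$ whose support meets every set $K\gudu$ (with $K\subset G_u$ compact) in a compact set and with $\int_{\gudu}c(\gamma t)\,d\beta(t)>0$ for all $\gamma$: using that $G_u/\gudu$ is $\sigma$-compact and paracompact, one covers it by countably many relatively compact opens, pulls these back through the open map $q$, chooses compactly supported bumps subordinate to a locally finite refinement, and sums them, the local finiteness ensuring the proper-support condition. The denominator $D(\gamma):=\int_{\gudu}c(\gamma s)\,d\beta(s)$ is strictly positive, continuous, and constant on $\gudu$-orbits (by left-invariance of $\beta$), so setting $b(\gamma):=c(\gamma)/D(\gamma)$ preserves the proper support and gives $\int_{\gudu}b(\gamma t)\,d\beta(t)=1$.

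Part (ii) then follows quickly. That $Q(f)$ lies in $C_c(G_u/\gudu)$ comes from the facts that $Q(f)$ depends only on $f|_{G_u}$, that its support is the compact set $q(\supp f\cap G_u)$, and that its lift $\gamma\mapsto\int_{\gudu}f(\gamma t)\,d\beta(t)$ is continuous and $\gudu$-invariant, hence descends continuously because $q$ is open. Surjectivity is immediate from (i): for $\psi\in C_c(G_u/\gudu)$ the function $(\psi\circ q)\,b$ extends to some $f\in C_c(G)$ (compactly supported because $\supp b\cap q^{-1}(\supp\psi)$ is compact), and $Q(f)=\psi$. For (iii) I would set $\rho_u(\gamma):=\int_{\gudu}b(\gamma s)\,\Delta_u(s)^{-1}\,d\beta(s)$; this is continuous and strictly positive by the properties of $b$, and the substitution $s\mapsto t^{-1}s$ together with left-invariance of $\beta$ and $\Delta_u(t^{-1})=\Delta_u(t)^{-1}$ gives the cocycle $\rho_u(\gamma t)=\rho_u(\gamma)\,\Delta_u(t)$.

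Finally, for (iv) I would show that the positive functional $\Lambda(f):=\int_{G_u}f\,\rho_u\,d\lambda_u$ on $C_c(G_u)$ factors through $Q$, and then transport it to $G_u/\gudu$. This factorisation is the one substantive point and the step I expect to be the main obstacle. The computation I have in mind inserts $1=\int_{\gudu}b(\gamma t)\,d\beta(t)$ into $\Lambda(f)$, applies Fubini, and makes the change of variable $\gamma\mapsto\gamma t^{-1}$ in the $\lambda_u$-integral; here one uses that $\lambda_u$ is \emph{right $\gudu$-invariant}, which is extracted from the left-invariance of the ambient Haar system $\{\lambda^v\}$ via inversion (right translation on $G_u$ is left translation on $G^u$ by a stabilizer element, which preserves $\lambda^u$). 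The cocycle of $\rho_u$ then produces a factor $\Delta_u(t)^{-1}$, and the inversion formula for $\beta$ turns the resulting twisted average $\int_{\gudu}f(\gamma t^{-1})\Delta_u(t)^{-1}\,d\beta(t)$ into exactly $Q(f)(\dot\gamma)$, yielding $\Lambda(f)=\int_{G_u}b(\gamma)\rho_u(\gamma)\,Q(f)(\dot\gamma)\,d\lambda_u(\gamma)$. In particular $\Lambda$ vanishes on $\ker Q$, so by the surjectivity in (ii) there is a well-defined functional $\sigma_u$ on $C_c(G_u/\gudu)$ with $\sigma_u(Q(f))=\Lambda(f)$; it is positive, since a non-negative $\psi$ equals $Q((\psi\circ q)b)$ with $(\psi\circ q)b\ge 0$, and Riesz representation then gives the Radon measure $\sigma_u$, the displayed identity being precisely $\sigma_u(Q(f))=\int_{G_u}f\,\rho_u\,d\lambda_u$. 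Since the statement only recombines \cite[Lemmas 3.2 and 3.3]{Cla07}, I would in practice carry out this cancellation by the same computation as there rather than reproving it ab initio.
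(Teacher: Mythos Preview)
Your proposal is correct and in fact goes well beyond what the paper does: the paper offers no proof of this lemma at all, merely introducing it with ``we combine and restate \cite[Lemma 3.2 and Lemma 3.3]{Cla07}'' and treating it as a black box. Your sketch is precisely the classical Bruhat--Weil argument that underlies Clark's lemmas, and your closing remark that one would in practice cite \cite{Cla07} rather than reprove it is exactly the paper's stance.
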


For the remainder of this section fix $u\in\gus$, 
an irreducible representation $\omega$ of $\gudu$ acting on a Hilbert space
$\fH{}$ and let $\sigma_u$ be the measure on $G_u/\gudu$ given by Lemma \ref{Bruhat}(iv).  
Consider the set 
\begin{eqnarray*}
\mathcal{L}^2(G_u /\gudu,\fH{},\sigma_u):=
\{f:G_u /\gudu\to\fH{}: && f \text{ is measurable and}   \hspace{1cm} \\
&& \int_{G_u /\gudu}{||f(\dot{\gamma})||^2\:d\sigma_u(\dot{\gamma})<\infty\}}.
\end{eqnarray*} 
Let $L^2(G_u /\gudu,\fH{},\sigma_u)$ denote the quotient of $\mathcal{L}^2(G_u /\gudu,\fH{},\sigma_u)$ where functions agreeing almost everywhere are equivalent.
Then $L^2(G_u /\gudu,\fH{},\sigma_u)$ is a Hilbert space 
with inner product 
$$(f\mid g)_{L^2}:= \int_{G_u /\gudu}{(f(\dot{\gamma})\mid g(\dot{\gamma}))_{\fH{}}
\:d\sigma_{u}(\dot{\gamma})}.$$
As is common in the literature we will not distinguish between a class in \newline $L^2(G_u /\gudu,\fH{},\sigma_u)$ and a representative of the class in $\mathcal{L}^2(G_u /\gudu,\fH{},\sigma_u)$.
Define \newline 
$M_u :C_{0}(\gus)\to B(L^2(G_u /\gudu,\fH{},\sigma_u))$  by
\begin{equation} \label{DfnMu}
(M_u(\phi)f)(\dot{\gamma}):=\phi(r(\gamma))f(\dot{\gamma}), 
\end{equation}
where $\phi\in C_{0}(\gus), \:f\in L^2(G_u /\gudu,\fH{},\sigma)$ 
and $\dot{\gamma}\in G_u /\gudu$. 
Then $M_{u}$ is a representation of $C_{0}(\gus)$ as multiplication operators
on $L^2(G_u /\gudu,\fH{},\sigma)$. \label{MuDfn}

\begin{remark}
We motivate the terminology `multiplication operator'. 
Assume that $\gus/G$ is $T_0$. 
Then by \cite[Theorem 2.1(i))]{Ram90} the map $\Phi:G_u /\gudu\to [u]$,
given by $\Phi(\dot{\gamma}):=r(\gamma)$, is 
a homeomorphism. We can push 
the measure $\sigma_{u}$ forward to get a measure $\sigma_{u}^*$ on $[u]$. Then 
$ L^2(G_u /\gudu,\fH{},\sigma_{u})$ is isomorphic to $L^2([u],\fH{},\sigma_{u}^*)$, where
the isomorphism is given by $f\mapsto f\circ \Phi^{-1}$. Define $N_u :C_{0}(\gus)\to B(L^2([u],\fH{},\sigma_u^{*}))$  by 
$$ (N_u(\phi)g)(v):=\phi(v)g(v).$$ Then $N_u$ is a representation of $C_{0}(\gus)$ as multiplication operators in the conventional sense and
\begin{equation*}
(M_u(\phi)f)(\dot{\gamma})=\phi(r(\gamma))f(\dot{\gamma}) 
						=\phi(r(\gamma))(f\circ\Phi^{-1})(r(\gamma))
						=(N_u(\phi)(f\circ\Phi^{-1}))(r(\gamma)).
\end{equation*} 
Hence $M_u$ is a representation of $C_{0}(\gus)$ as multiplication operators up to isomorphism. 
\hfill $\square$
\end{remark}

Next we describe the *-homomorphism of $C_{0}(\gus)$ into the multiplier algebra 
$\mathcal{M}(C^*(G))$ of $C^*(G)$. By \cite{Ren80} there is a *-homomorphism of $C^*(\gus)$ into $\mathcal{M}(C^*(G))$ (described below). However, note that $C_{0}(\gus)$ is isomorphic to $C^*(\gus)$. To see this, note that if
$f,g\in C_{c}(G)$, then their convolution product $f\ast g$ restricted to 
$\gus$ is just their pointwise product. Then the identity map $\mathrm{id}$ from 
$C_{c}(\gus)\subset C_{0}(\gus)$ to $C_{c}(\gus)\subset C^*(\gus)$ is 
a *-isomorphism which is bounded in the $I$-norm, and hence extends to an isomorphism of $ C_{0}(\gus)$ onto $C^*(\gus)$.
Fix $\phi\in C_{0}(\gus), f\in C_{c}(G)$ and $\gamma\in G$. Define  
\begin{equation} \label{dfnV}
 ((V\phi) f)(\gamma):= \phi(r(\gamma)) f(\gamma), \text{   and}
\end{equation}
$$(f (V\phi))(\gamma):= \phi(s(\gamma)) f(\gamma).$$
Then $V\phi$ acts as double centralizers on $C_{c}(G)$ 
(see the discussion on p.59 of \cite{Ren80}). 
By \cite[Lemma 1.14]{Ren80}  the action of $V\phi$
as double centralizers
extends to $C^*(G)$, and thus the map 
$V:\phi\mapsto V\phi$ is a *-homomorphism of $C_{0}(\gus)$
into  $\mathcal{M}(C^*(G))$.

To show that $L_u\circ V$ is unitarily equivalent to $M_u$ we need an isomorphism \newline $P:\overline{C_{c}(G_u)\odot\fH{}} \to L^2(G_u /\gudu,\fH{},\sigma_{u})$ which intertwines these representations. We define such a $P$ in Lemma \ref{lem:defP} below. For this we use a Borel cross section $c:G_u/\gudu\to G_u$. This means that $c$ is a function that assigns to each equivalence class $\dot{\gamma}\in G_u/\gudu$ a fixed representative in $G_u$ such that $q(c(\dot{\gamma}))=\dot{\gamma}$, where $q:G_u \to \gus/G_u$ is the quotient map. To see that a Borel cross section exists, note that since $G$ is Hausdorff, it follows that $G_u$ closed in $G$. Thus $G_u$ 
is locally compact in addition to being second-countable  and Hausdorff. Therefore $G_u$ is Polish. Then, since the quotient map $q$ is open
(Lemma \ref{OpenQuotientMap}) and 
$q^{-1}(\dot{\gamma})$ is a closed set in $G_u$
for every $\dot{\gamma}\in G_u/\gudu$, it follows that there exists a 
Borel cross section $c:G_u/\gudu\to G_u$, \cite[Theorem 3.4.1]{ArvInvitation}. 
Let $\kappa:G_u \to \gudu$ be the function defined by 
\begin{equation}\label{crossSectionformula}
\dot{\gamma}=c(\dot{\gamma})\kappa(\gamma),
\end{equation} 
for each $\gamma\in G_u$. Note: if $\gamma\in G_u$ and $t\in\gudu$, then 
$\kappa(\gamma t)=\kappa(\gamma)t$.

\begin{lem} \label{lem:defP}
	Let $\omega:\gudu\to B(\fH{})$ be an irreducible unitary representation. Let $f\in C_{c}(G_u), h\in\fH{}$ and $\gamma\in G_u$, let
	$\beta$ be a Haar measure on $\gudu$ and let $\rho_{u}$ be the function given in Lemma \ref{Bruhat}(iii). Define 
	$$ P(f\otimes h)(\dot{\gamma}):= \int_{\gudu}{f(c(\dot{\gamma})t)\:\rho_{u}(c(\dot{\gamma})t)^{-1/2}
		\: \omega_{t}h \:d\beta(t)}. $$
	Then $P$ maps $C_{c}(G_u)\odot\fH{}$ into $L^2(G_u /\gudu,\fH{},\sigma_{u})$. 
\end{lem}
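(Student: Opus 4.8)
The plan is to show two things: that $P(f \otimes h)$ is a genuine element of $\mathcal{L}^2(G_u/\gudu, \fH{}, \sigma_u)$ for each elementary tensor $f \otimes h$, namely that it is measurable as an $\fH{}$-valued function on $G_u/\gudu$ and that its $L^2$-norm is finite, and then to extend by linearity to the algebraic tensor product $C_c(G_u) \odot \fH{}$. First I would check that the integrand $t \mapsto f(c(\dot\gamma)t)\,\rho_u(c(\dot\gamma)t)^{-1/2}\,\omega_t h$ is integrable against $\beta$ for each fixed $\dot\gamma$, so that the defining integral makes sense pointwise. Since $f \in C_c(G_u)$, for fixed $\dot\gamma$ the function $t \mapsto f(c(\dot\gamma)t)$ has compact support in $\gudu$ (the intersection of $c(\dot\gamma)\gudu$ with the compact $\supp f$); on this compact set $\rho_u$ is continuous and strictly positive by Lemma \ref{Bruhat}(iii), hence bounded away from $0$, and $\|\omega_t h\| = \|h\|$ since $\omega$ is unitary, so the integrand is bounded with compact support and thus $\beta$-integrable.

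Next I would address measurability of $\dot\gamma \mapsto P(f \otimes h)(\dot\gamma)$. The subtle point is that $c$ is only a \emph{Borel} cross section, not continuous, so the map $\dot\gamma \mapsto c(\dot\gamma)$ is merely Borel. I would argue that $(\dot\gamma, t) \mapsto f(c(\dot\gamma)t)\,\rho_u(c(\dot\gamma)t)^{-1/2}\,\omega_t h$ is jointly Borel as a map into $\fH{}$ (using continuity of multiplication in $G$, continuity of $f$ and $\rho_u$, Borel-ness of $c$, and strong continuity of $t \mapsto \omega_t$), and then invoke a Fubini-type theorem for Bochner integrals to conclude that the $\beta$-integral in $t$ yields a Borel function of $\dot\gamma$.

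For the norm estimate, I would bound $\|P(f \otimes h)(\dot\gamma)\|$ and integrate against $\sigma_u$. Pulling the norm inside the integral gives
\begin{equation*}
\|P(f \otimes h)(\dot\gamma)\| \leq \|h\| \int_{\gudu} |f(c(\dot\gamma)t)|\,\rho_u(c(\dot\gamma)t)^{-1/2}\, d\beta(t).
\end{equation*}
The key is to convert the resulting integral over $G_u/\gudu$ into one over $G_u$ so that finiteness follows from $f \in C_c(G_u)$. Here I would exploit the Weil-type integration formula of Lemma \ref{Bruhat}(iv), which relates integration of $\rho_u$-weighted functions on $G_u$ against $\lambda_u$ to the iterated integral over $G_u/\gudu$ and $\gudu$; combined with the cocycle identity $\rho_u(\gamma t) = \rho_u(\gamma)\Delta_u(t)$ from Lemma \ref{Bruhat}(iii), the weight $\rho_u(c(\dot\gamma)t)^{-1/2}$ should be reconcilable so that the whole double integral collapses to a finite integral of $|f|^2$-type data over the compact support of $f$.

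The main obstacle I expect is the measurability step together with making the norm bound honest. The non-continuity of the Borel cross section $c$ means one cannot simply appeal to continuity to get measurability of $P(f \otimes h)$, so the Fubini/joint-measurability argument must be handled carefully. Equally delicate is matching the $\rho_u^{-1/2}$ weights against the factor $\rho_u$ appearing in Lemma \ref{Bruhat}(iv) and the modular function $\Delta_u$ from the cocycle identity, so that the change from $\sigma_u$-integration on the quotient back to $\lambda_u$-integration on $G_u$ comes out with the correct exponents; getting these weights to balance is where the real bookkeeping lies.
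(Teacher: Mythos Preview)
Your plan is correct and follows the same two-step skeleton as the paper (measurability, then square-integrability), but the tactics within each step differ slightly. For measurability the paper works weakly: it fixes $k\in\fH{}$, shows the scalar integrand $(\dot\gamma,t)\mapsto f(c(\dot\gamma)t)\rho_u(c(\dot\gamma)t)^{-1/2}(\omega_t h\mid k)$ is jointly Borel, and then applies ordinary Tonelli/Fubini (verifying the $L^1$ hypothesis precisely via Lemma~\ref{Bruhat}(iv), which converts the iterated integral to $\int_{G_u}|f|\rho_u^{1/2}\,d\lambda_u<\infty$). Your Bochner-measurability route is equally valid but slightly heavier. The more substantive difference is the square-integrability bound: the paper does \emph{not} use Lemma~\ref{Bruhat}(iv) here at all. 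Instead it observes that the inner integral is exactly $Q(|f|\rho_u^{-1/2})(\dot\gamma)$ with $|f|\rho_u^{-1/2}\in C_c(G_u)$, so by Lemma~\ref{Bruhat}(ii) this function lies in $C_c(G_u/\gudu)$; its square is then trivially $\sigma_u$-integrable. Your plan to push the squared bound back to $G_u$ via (iv) and the cocycle identity can be made to work, but after expanding the square you land on a double integral $\int_{\gudu}\int_{G_u}|f(\gamma)||f(\gamma r)|\,d\lambda_u(\gamma)\,\Delta_u(r)^{-1/2}\,d\beta(r)$ rather than the clean ``$|f|^2$-type'' single integral you anticipate---finite, yes, because the effective $r$-domain is the compact set $(\supp f)^{-1}(\supp f)\cap\gudu$, but the paper's route via (ii) sidesteps all of this bookkeeping.
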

\begin{proof}
We have to show that $P(f\otimes h)$ is measurable and square integrable.
The function $P(f\otimes h)$ is measurable if and only if for all $k\in\fH{}$ the map 
\begin{eqnarray} 
\dot{\gamma}\mapsto \left(P(f\otimes h)(\dot{\gamma}) \mid k \right)_{\fH{}} 
	= \int_{\gudu}{f(c(\dot{\gamma})t)\:\rho_{u}(c(\dot{\gamma})t)^{-1/2} 
	\left(\omega_{t}h \mid k \right)_{\fH{}} \:d\beta(t) } \label{measP}
\end{eqnarray}
is measurable.  To show that (\ref{measP}) is measurable 
we employ Tonelli and Fubini's theorems, \cite[Theorem 8.8]{RudRC}. 
Consider the measure space $(G_u/\gudu\times \gudu,\sigma_u\times\beta)$. 
Define $F:G_u/\gudu\times \gudu \to \mathbb{C}$ by 
\begin{equation} \label{eq:Fmeas}
F(\dot{\gamma},t):= f(c(\dot{\gamma})t)\:\rho_{u}(c(\dot{\gamma})t)^{-1/2} 
	\left(\omega_{t}h \mid k \right)_{\fH{}}.
\end{equation}  
Note that $c$ is a Borel cross section and is thus measurable. Multiplication 
$G_u\times \gudu \to G_u$ is continuous, and thus measurable. 
Hence $(\dot{\gamma},t)\mapsto f(c(\dot{\gamma})t)$ and $(\dot{\gamma},t)\mapsto \rho_{u}(c(\dot{\gamma})t)^{-1/2}$ are both 
measurable functions on $G_u/\gudu\times \gudu$. 
The inner product $\left(\omega_{t}h \mid k \right)_{\fH{}}$ 
is continuous in $t$, and we can view this inner product as the composition
of the projection onto the second coordinate together with a continuous 
function in $t$, which is then also measurable on $G_u/\gudu\times \gudu$. 
The pointwise product of measurable functions is measurable, and so 
$F$ is measurable on $G_u/\gudu\times \gudu$. We show that 
$F\in L^1(\sigma_u\times\beta)$,
so that we can deduce the measurability of (\ref{measP}) from 
Fubini's theorem (\cite[Theorem 8.8(c)]{RudRC}). 
Let $F_{\dot{\gamma}}(t):=F(\dot{\gamma}, t)$.
By Theorem 8.8(b) of \cite{RudRC}, for $F$ to be in $L^1(\sigma_u\times\beta)$, 
it suffices to show that
\begin{eqnarray*}
\int_{G_u/\gudu}{\int_{\gudu}{|F|_{\dot{\gamma}}(t) \:d\beta(t)}
    \:d\sigma_u(\dot{\gamma})} <\infty.
\end{eqnarray*}
Note that 
\begin{eqnarray}
	&& \int_{G_u/\gudu}{\int_{\gudu}{|F|_{\dot{\gamma}}(t) \:d\beta(t)}
    \:d\sigma_u(\dot{\gamma})} \nonumber\\
    &=& \int_{G_u/\gudu}{\int_{\gudu}{|f(c(\dot{\gamma})t)|
    \:\rho_{u}(c(\dot{\gamma})t)^{-1/2} 
	\:|\left(\omega_{t}h \mid k \right)_{\fH{}}| \:d\beta(t)}
    \:d\sigma_u(\dot{\gamma})}   \nonumber\\
    &\leq& ||h||\:||k|| \int_{G_u/\gudu}{\int_{\gudu}{|f(c(\dot{\gamma})t)|
    \:\rho_{u}(c(\dot{\gamma})t)^{-1/2}  \:d\beta(t)}   \:d\sigma_u(\dot{\gamma})}.\label{ineq:integr}
\end{eqnarray}
Since $f\in C_{c}(G_u)$ and $\rho$ are continuous, we have that the pointwise product $|f|\rho_{u}^{1/2}\in C_{c}(G_u)$. 
Apply Lemma \ref{Bruhat}(iv) to (\ref{ineq:integr}) to get 
\begin{eqnarray}
&&\int_{G_u/\gudu}{\int_{\gudu}{|F|_{\dot{\gamma}}(t) \:d\beta(t)}
    \:d\sigma_u(\dot{\gamma})} \nonumber \\
    &\leq& ||h||\:||k|| \int_{G_u/\gudu}{\int_{\gudu}{|f(c(\dot{\gamma})t)|
    \:\rho_{u}(c(\dot{\gamma})t)^{-1/2}  \:d\beta(t)}   \:d\sigma_u(\dot{\gamma})}\nonumber \\
    &=& ||h||\:||k|| \int_{G_u}{|f(\gamma)|\:\rho_{u}(\gamma)^{1/2}\:d\lambda_u(\gamma)}
     < \infty. \label{FubiniApplies}
\end{eqnarray}
Now, by Theorem 8.8(b) of \cite{RudRC}, we have that $F\in L^1(\sigma_u\times\beta)$. 
Then by Theorem 8.8(c) of \cite{RudRC}, the map (\ref{measP}) is measurable.

Next we show square integrability. 
Note that
\begin{eqnarray}
&& \int_{G_u/\gudu}{\left| \left|  P(f\otimes h)(\dot{\gamma}) \right| \right|_{\fH{}} ^{2}  
\: d\sigma_{u}(\dot{\gamma})} \nonumber \\
&=& \int_{G_u/\gudu}{\left| \left|  \int_{\gudu}{f(c(\dot{\gamma})t)
\:\rho_{u}(c(\dot{\gamma})t)^{-1/2}
\: \omega_{t}h \:d\beta(t)} \right| \right|_{\fH{}} ^{2}  \: d\sigma_{u}(\dot{\gamma})} 
\nonumber \\
&<& ||h||_{\fH{}}^{2} \int_{G_u/\gudu}{\left( \int_{\gudu}{\left|  f(c(\dot{\gamma})t)\right|
\:\rho_{u}(c(\dot{\gamma})t)^{-1/2}
\:d\beta(t)}  \right) ^{2}  \: d\sigma_{u}(\dot{\gamma})}.  \label{ineq:PsqIntegr}
\end{eqnarray}
By Lemma \ref{Bruhat}(ii) it follows that 
$$Q(|f|\rho_{u}^{-1/2})(\dot{\gamma}):=  
\int_{\gudu}{\left|  f(c(\dot{\gamma})t)\right|
\:\rho_{u}(c(\dot{\gamma})t)^{-1/2}
\:d\beta(t)}$$
is a function in $C_{c}(G_u/\gudu)$. Then $Q(\:|f|\rho_{u}^{-1/2}\:)^2$
is also in $C_{c}(G_u/\gudu)$.
Now (\ref{ineq:PsqIntegr}) is equal to 
$$ ||h||_{\fH{}}^{2} \int_{G_u/\gudu}{ Q(\:|f|\rho_{u}^{-1/2}\:)^2
 \: d\sigma_{u}(\dot{\gamma})} <\infty,$$
which shows that $P(f\otimes h)$ is square integrable. This proves our claim. 
\end{proof}

Recall that $V:C_{0}(\gus)\to\mathcal{M}(C^*(G))$ is 
a *-homomorphism and that \newline
$L_{u}: \mathcal{M}(C^*(G)) \to B(\overline{C_{c}(G_u)\odot\fH{}})$ denotes
the extension of $\ind{\gudu}{G}{\pi_{\omega}}$ to $\mathcal{M}(C^*(G))$.

\begin{prop} \label{mainEquivReps}
Suppose that $G$ is a second-countable locally compact Hausdorff groupoid with 
a Haar system $\{\lambda^u\}_{u\in\gus}$. Fix $u\in\gus$ and suppose that 
$\omega:\gudu\to B(\fH{})$ is an irreducible unitary representation and 
$\pi_{\omega}$ its integrated form. 
Then $M_u$ is unitarily equivalent to $L_u\circ V$.
\end{prop}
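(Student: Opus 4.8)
The plan is to verify that the map $P$ of Lemma \ref{lem:defP} extends to a unitary from $\overline{C_{c}(G_u)\odot\fH{}}$ onto $L^2(G_u/\gudu,\fH{},\sigma_u)$ implementing the equivalence. Concretely, I would check three things: (a) $P$ preserves inner products on elementary tensors, so that it extends to an isometry $\overline{P}$ of the completion; (b) this isometry is surjective; and (c) it intertwines, i.e.\ $M_u(\phi)\circ P=P\circ L_u(V\phi)$ for every $\phi\in\co{\gus}$. Granting (a)--(c), $\overline{P}$ is a unitary intertwiner and hence $M_u$ is unitarily equivalent to $L_u\circ V$.

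For (a) I would compute $(P(f\otimes h)\mid P(g\otimes k))_{L^2}$ and match it with the induced inner product $(\op(\langle g,f\rangle_{C^*(\gudu)})h\mid k)$ coming from (\ref{eq:InduedHilbertInner}). Expanding the two defining $\beta$-integrals of $P$ and using that $\omega$ is unitary replaces $(\omega_s h\mid\omega_t k)_{\fH{}}$ by $(\omega_{t^{-1}s}h\mid k)_{\fH{}}$; the substitution $s=tr$ together with left-invariance of $\beta$ turns the inner double integral into one over $(t,r)$. The quasi-invariance $\rho_u(\gamma t)=\rho_u(\gamma)\Delta_u(t)$ of Lemma \ref{Bruhat}(iii) lets me fuse $\rho_u(c(\dot\gamma))^{-1}\Delta_u(t)^{-1}$ into $\rho_u(c(\dot\gamma)t)^{-1}$, so that after setting $\eta=c(\dot\gamma)t$ the integrand depends on $\dot\gamma,t$ (for fixed $r$) only through $\eta$. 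I then apply Lemma \ref{Bruhat}(iv) to collapse the $\sigma_u$-integral and the $t$-integral into a single $\lambda_u$-integral over $G_u$; the surviving factor $\rho_u(\eta)^{-1}$ cancels the $\rho_u(\eta)$ supplied by (iv), leaving $\int_{G_u}f(\eta r)\overline{g(\eta)}\,d\lambda_u(\eta)=\langle g,f\rangle_{C^*(\gudu)}(r)$. The remaining $r$-integral against $\Delta_u(r)^{-1/2}(\omega_r h\mid k)_{\fH{}}$ is exactly $(\op(\langle g,f\rangle_{C^*(\gudu)})h\mid k)$ by the definition (\ref{eq:IntergfromGeneral}) of the integrated form. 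Every interchange of integrals is justified by the same $L^1$-estimate already obtained in the proof of Lemma \ref{lem:defP}.

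For (b), since $\overline{P}$ is isometric its range is closed, so it suffices to show the range is dense, which I would do by showing its orthogonal complement is trivial. Suppose $g\in L^2(G_u/\gudu,\fH{},\sigma_u)$ satisfies $(P(f\otimes h)\mid g)_{L^2}=0$ for all $f\in C_{c}(G_u)$ and $h\in\fH{}$. Expanding and again applying Lemma \ref{Bruhat}(iv), now with $\eta=c(\dot\gamma)t$ so that $t=\kappa(\eta)$ and $\dot\gamma=q(\eta)$ by (\ref{crossSectionformula}), rewrites this as $\int_{G_u}f(\eta)\rho_u(\eta)^{1/2}(\omega_{\kappa(\eta)}h\mid g(q(\eta)))_{\fH{}}\,d\lambda_u(\eta)=0$. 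Since $f\in C_{c}(G_u)$ is arbitrary and $\rho_u>0$, this forces $(\omega_{\kappa(\eta)}h\mid g(q(\eta)))_{\fH{}}=0$ for $\lambda_u$-a.e.\ $\eta$; letting $h$ range over a countable dense subset of $\fH{}$ and using that each $\omega_{\kappa(\eta)}$ is unitary (hence has dense range) gives $g(q(\eta))=0$ for $\lambda_u$-a.e.\ $\eta$. As Lemma \ref{Bruhat}(iv) matches $\lambda_u$-null sets of the form $q^{-1}(E)$ with $\sigma_u$-null sets $E$, we conclude $g=0$ in $L^2$, so $\overline{P}$ is onto.

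Part (c) is the most direct. For $\phi\in\co{\gus}$ the multiplier $V\phi$ acts on the module by $((V\phi)\cdot f)(\gamma)=\phi(r(\gamma))f(\gamma)$ on $G_u$; this is compatible with (\ref{dfnV}) because in the convolution defining the module action the integration variable $\eta$ is constrained to $r(\eta)=r(\gamma)$ by $\lambda^{r(\gamma)}$, so $\phi(r(\eta))=\phi(r(\gamma))$ pulls out. Hence $L_u(V\phi)(f\otimes h)=((V\phi)\cdot f)\otimes h$, and since $r(c(\dot\gamma)t)=r(c(\dot\gamma))=r(\gamma)$ for $t\in\gudu$, the scalar $\phi(r(\gamma))$ factors out of the $\beta$-integral defining $P$, giving $P(L_u(V\phi)(f\otimes h))(\dot\gamma)=\phi(r(\gamma))\,P(f\otimes h)(\dot\gamma)=(M_u(\phi)P(f\otimes h))(\dot\gamma)$. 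This holds on elementary tensors, hence on the dense subspace $C_{c}(G_u)\odot\fH{}$, and extends by continuity. I expect the main obstacle to lie in (a): tracking $\Delta_u$, $\rho_u$, and the three measures $\sigma_u,\beta,\lambda_u$ so that all factors cancel, the decisive move being the combined use of Lemma \ref{Bruhat}(iii)--(iv) to fold the $\sigma_u$-integral and one $\beta$-integral into a single $\lambda_u$-integral.
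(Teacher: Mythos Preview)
Your plan is sound and parts (a) and (c) match the paper's argument closely; in (c) you are slightly more direct than the paper, which checks the intertwining on tensors of the form $(f\cdot g)\otimes h$ rather than all $f\otimes h$, but your observation that the multiplier $V\phi$ acts on the module $C_c(G_u)$ by pointwise multiplication with $\phi\circ r$ (because $r(\eta)=r(\gamma)$ on $\lambda^{r(\gamma)}$) is exactly the computation the paper carries out in (\ref{eq:Intertwine2}), so the two arguments are essentially the same.

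Your treatment of surjectivity in (b) is genuinely different. The paper proves density of the range constructively: given $F\in C_c(G_u/\gudu,\fH{})$ and $\epsilon>0$, it builds, via Lemma \ref{FgammainImage} and a partition of unity subordinate to a finite cover of $\supp(F)$, an explicit element $\sum_i\psi_iF_{\gamma_i}$ in the image of $P$ that is $\epsilon$-close to $F$. Your orthogonal-complement argument is shorter and more elegant, and it is correct, but it leans on two facts that go a little beyond what the paper states: first, the disintegration formula of Lemma \ref{Bruhat}(iv) is stated only for $f\in C_c(G_u)$, and you apply it to the Borel, compactly supported but non-continuous integrand $\eta\mapsto f(\eta)\rho_u(\eta)^{-1/2}(\omega_{\kappa(\eta)}h\mid g(q(\eta)))_{\fH{}}$; second, your final step uses that $\lambda_u(q^{-1}(E))=0$ implies $\sigma_u(E)=0$. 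Both follow by routine monotone-class/Bruhat-section arguments (e.g.\ using $b$ from Lemma \ref{Bruhat}(i) to produce preimages in $C_c(G_u)$ and extend (iv) to positive Borel functions), but they deserve a sentence of justification. The trade-off is that the paper's approach stays entirely within the stated $C_c$-version of (iv), at the cost of a longer and more hands-on construction; yours is quicker once the measurable extension of (iv) and the null-set correspondence are in hand.
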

\begin{proof}
Let $\rho_u, \Delta_u$ and $\sigma_{u}$ be as is in Lemma \ref{Bruhat}.
To simplify notation we suppress the subscript $u$. 
Let $f\in C_{c}(G_u), h\in\fH{}$ and $\gamma\in G_u$. Then by Lemma \ref{lem:defP}  
$$ P(f\otimes h)(\dot{\gamma}):= \int_{\gudu}{f(c(\dot{\gamma})t)\:\rho(c(\dot{\gamma})t)^{-1/2}
    \: \omega_{t}h \:d\beta(t)}, $$
defines a  map  
$P:C_{c}(G_u)\odot\fH{} \to L^2(G_u /\gudu,\fH{},\sigma)$, where $\beta$ is the Haar on $\gudu$. We claim 
that $P$ isometric and maps onto a dense subspace, so that it extends to a unitary operator of the completion 
$\overline{C_{c}(G_u)\odot\fH{}}$ onto $L^2(G_u /\gudu,\fH{},\sigma)$.

We first show that $P$ is isometric on $C_{c}(G_u)\odot\fH{}$.
Let $f,g\in C_{c}(G_u) $ and $h,k\in\fH{}$. We need to show that 
$$(P(f\otimes h)\mid P(g\otimes k))_{L^2}= \left( f\otimes h \mid g\otimes k \right).$$
We break up the calculations so that they are easier to follow. 
Note that for $\dot{\gamma}\in G_u /\gudu$
\begin{eqnarray}
&&(P(f\otimes h)(\dot{\gamma})\mid 
			P(g\otimes k)(\dot{\gamma}))  \nonumber \\
\hspace{1.5cm} &=&  \left( \int_{\gudu}{f(c(\dot{\gamma})t)\rho(c(\dot{\gamma})t)^{-1/2}
\omega_th \:d\beta(t)}   \mid  
\int_{\gudu}{g(c(\dot{\gamma})s)\rho(c(\dot{\gamma})s)^{-1/2}\omega_sk \:d\beta(s)} \right)
\nonumber  \\
&=&\int_{\gudu}{\int_{\gudu}{
  f(c(\dot{\gamma})t)\overline{g(c(\dot{\gamma})s)}\:\rho(c(\dot{\gamma})t)^{-1/2} 
  \rho(c(\dot{\gamma})s)^{-1/2}
\left(\omega_th \mid \omega_sk   \right)
  \:d\beta(t)}\:d\beta(s)} \nonumber  \\
&& \text{(now apply Lemma \ref{Bruhat}(iii))} \nonumber  \\
&=& \int_{\gudu}{\int_{\gudu}{
  f(c(\dot{\gamma})t)\overline{g(c(\dot{\gamma})s)}\:\rho(c(\dot{\gamma}))^{-1}  
  \Delta(t)^{-1/2} \Delta(s)^{-1/2} \left(\omega_{s^{-1}t}h \mid k   \right)
  \:d\beta(t)}\:d\beta(s)}. \label{LongCal1}
\end{eqnarray}
Substitute $r=s^{-1}t$ and apply Lemma \ref{Bruhat}(iii) so that 
$$\rho(c(\dot{\gamma}))^{-1}\Delta(t)^{-1/2} \Delta(s)^{-1/2} 
=\rho(c(\dot{\gamma}))^{-1}\Delta(r)^{-1/2}\Delta(s)^{-1}
=\rho(c(\dot{\gamma})s)^{-1}\Delta(r)^{-1/2}.$$ 
Then (\ref{LongCal1}) is
\begin{eqnarray*}
=\int_{\gudu}{\int_{\gudu}{
  f(c(\dot{\gamma})sr)\overline{g(c(\dot{\gamma})s)}\:\rho(c(\dot{\gamma})s)^{-1}  
  \Delta(r)^{-1/2} \left(\omega_rh \mid k   \right)
  \:d\beta(r)}\:d\beta(s)}. 
\end{eqnarray*}

Then 
\begin{eqnarray}
&&(P(f\otimes h)\mid P(g\otimes k))_{L^2} \nonumber \\
&=& \int_{G_u /\gudu}{(P(f\otimes h)(\dot{\gamma})\mid 
			P(g\otimes k)(\dot{\gamma})) \:d\sigma(\dot{\gamma})}\nonumber   \\
&=& \int_{G_u /\gudu}{\int_{\gudu}{\int_{\gudu}{
  f(c(\dot{\gamma})sr)\overline{g(c(\dot{\gamma})s)}\:\rho(c(\dot{\gamma})s)^{-1}  
  \Delta(r)^{-1/2}}}} \nonumber \\
	&& \hspace{6cm}  \left(\omega_rh \mid k   \right)
 	    \:d\beta(r)\:d\beta(s)   \:d\sigma(\dot{\gamma})  \nonumber \\
&=& \int_{\gudu}{\int_{G_u /\gudu}{\int_{\gudu}{
  f(c(\dot{\gamma})sr)\overline{g(c(\dot{\gamma})s)}\:\rho(c(\dot{\gamma})s)^{-1}  
  \Delta(r)^{-1/2}} }} \label{LongCal2} \\
	&&  \hspace{6cm}  \left(\omega_rh \mid k   \right)
  \:d\beta(s)   \:d\sigma(\dot{\gamma})\:d\beta(r). \nonumber \:\:\:\:\:\:
  \:\:\:\:\:\:   
  \end{eqnarray}
Focussing on the inner two integrals we can view $f$ as a function $f_r$
  of $c(\dot{\gamma})$ and $s$ only, with $r$ fixed; i.e.
  $f_r(c(\dot{\gamma})s):=f(c(\dot{\gamma})sr)$. Then 
  (\ref{LongCal2}) is equal to
\begin{eqnarray*}
 &=& \left(\int_{\gudu}{\int_{G_u /\gudu}{\int_{\gudu}{
  f_r(c(\dot{\gamma})s)\overline{g(c(\dot{\gamma})s)}\rho(c(\dot{\gamma})s)^{-1}  
  \Delta(r)^{-1/2} \omega_rh  \:d\beta(s)}   \:d\sigma(\dot{\gamma})\:d\beta(r)}}
  \mid k \right) \\
&& \text{(now apply Equation \ref{crossSectionformula} to substitute $c(\dot{\gamma})$
  by $\gamma\kappa(\gamma)^{-1}$ )} \\
&=& \left(\int_{\gudu} {\left( \int_{G_u /\gudu}{\int_{\gudu}{
  f_r(\gamma\kappa(\gamma)^{-1} s)
  \overline{g(\gamma\kappa(\gamma)^{-1} s)}
  \rho(\gamma\kappa(\gamma)^{-1} s)^{-1}  
    \:d\beta(s)}   \:d\sigma(\dot{\gamma})}\right) \Delta(r)^{-1/2} \omega_rh\:d\beta(r)}
  \mid k \right) \\
&=& \left(\int_{\gudu} {\left( \int_{G_u /\gudu}{\int_{\gudu}{
  f_r(\gamma t)
  \overline{g(\gamma t)}
  \rho(\gamma t)^{-1}  
    \:d\beta(t)}   \:d\sigma(\dot{\gamma})}\right) \Delta(r)^{-1/2} \omega_rh\:d\beta(r)}
  \mid k \right) \\
&& \text{(now apply Lemma \ref{Bruhat}(iv) to the two inner integrals)} \\
&=& \left(\int_{\gudu} {\left( \int_{G_u}{
  f_r(\gamma)
  \overline{g(\gamma)} 
    \:d\lambda_u(\gamma) }\right) \Delta(r)^{-1/2} \omega_rh\:d\beta(r)}
  \mid k \right) \\
&& \text{(now apply Equation \ref{eq:ModuleInnerProd}  
to the inner integral)} \\
&=& \left(\int_{\gudu} {
  \langle g,f\rangle_{_{C^*(\gudu)}} (r)\:\Delta(r)^{-1/2} \:\omega_rh\:d\beta(r)}  
  \mid k \right) \\
&=& \left( \pi_{\omega}(\langle g,f\rangle_{_{C^*(\gudu)}})\: h \mid k \right) 
\hspace{0.6cm}\text{ (recall $\pi_{\omega}$ is given by Equation 
\ref{eq:IntergfromGeneral})} \\
&=& \left( f\otimes h \mid g\otimes k \right) \hspace{1cm}\text{ (by Equation
\ref{eq:InduedHilbertInner})}.
\end{eqnarray*}
Hence $(P(f\otimes h)\mid P(g\otimes k))_{L^2}=\left( f\otimes h \mid g\otimes k \right)$, showing $P$ is isometric.

Next we show that $P$ maps onto a dense subspace of $L^2(G_u /\gudu,\fH{},\sigma)$. For 
this part of the proof we adapt techniques from
Proposition 5.4 in \cite{CrossedProd} for transformation groups 
to our groupoid setting.
Suppose that $F\in C_{c}(G_u /\gudu,\fH{})$ and $\epsilon>0$. Then it 
will suffice to show that there is an $f$ in the image $\mathrm{Im}(P)$ of $P$
such that 
\begin{equation} \label{UnitaryOntoIneq}
||f-F||_{L^2} < \epsilon^2.
\end{equation}
Let $D:=\supp(F)$ and let $C$ be a compact neighbourhood of $D$. We need the following lemma:

\begin{lem} \label{FgammainImage}
Fix $\dot{\gamma}\in D$. Then there is a $F_{\gamma}\in C_{c}(G_u /\gudu,\fH{})$
such that $F_{\gamma}\in \mathrm{Im}(P)$ and 
\begin{equation} \label{POntoLemmaIneq}
||\: F_{\gamma}(\dot{\gamma})-F(\dot{\gamma})\: ||_{\fH{} }
< \frac{\epsilon}{3\sigma(C)^{1/2}}.
\end{equation} 
\end{lem}
\begin{proof}
Since $t\mapsto \omega_{t}(F(\dot{\gamma}))$ is continuous there is a 
neighbourhood $N_u$ of (the fixed) $u$ in $\gudu$ such that 
	\begin{equation} \label{IneqOmega}
	||\: \omega_{t}(F(\dot{\gamma}))-F(\dot{\gamma})\: ||_{\fH{} }
	< \frac{\epsilon}{3\sigma(C)^{1/2}},
	\end{equation}
whenever $t\in N_u$.

Let $N_{c(\dot{\gamma})}$ be a neighbourhood of $c(\dot{\gamma})$ in $G_u$, and
let $N_{c(\dot{\gamma})}N_u=\{\eta t: \eta\in N_{c(\dot{\gamma})},t\in N_u\}$.
Let $g\in C_{c}(G_u)^{+}$ be such that 
$\supp(g)\subset N_{c(\dot{\gamma})}N_u$, $g(c(\dot{\gamma}))\neq 0$ and 
	\begin{equation} \label{normalisedg}
	\int_{\gudu}{g(c(\dot{\gamma})t)\rho(c(\dot{\gamma})t)^{-1/2}}\:d\beta(t) = 1.
	\end{equation}
Then  for $k\in\fH{}$ with $||k||\leq $1, we have
 \begin{eqnarray*}
 &&  \mid \left(P(g\otimes F(\dot{\gamma}))(\dot{\gamma})\mid k) \right)_{\fH{}}-
 \left(F(\dot{\gamma}) \mid k\right)_{\fH{}}  \mid \\
 &=& \left| \left(\int_{\gudu}{g(c(\dot{\gamma})t)\rho(c(\dot{\gamma})t)^{-1/2} \: 
     \omega_{t}(F(\dot{\gamma}))\:d\beta(t)}
     - F(\dot{\gamma}) \mid k\right)_{\fH{}}  \right| \\
 &=& \left|\left( \int_{\gudu}{g(c(\dot{\gamma})t)\rho(c(\dot{\gamma})t)^{-1/2} \: 
     \omega_{t}(F(\dot{\gamma}))\:d\beta(t)}
     -   \left(\int_{\gudu}{g(c(\dot{\gamma})t)\rho(c(\dot{\gamma})t)^{-1/2}}\:d\beta(t)\right)
     F(\dot{\gamma}) \mid k\right)_{\fH{}}  \right|  \\
 &=& \left| \int_{\gudu}{g(c(\dot{\gamma})t)\rho(c(\dot{\gamma})t)^{-1/2} \: 
     \left( \omega_{t}(F(\dot{\gamma})) -F(\dot{\gamma})\mid k\right)_{\fH{}} 
     \:d\beta(t)} \right| \\
 &\leq &  \int_{\gudu}{g(c(\dot{\gamma})t)\rho(c(\dot{\gamma})t)^{-1/2} \: 
     || \omega_{t}(F(\dot{\gamma})) -F(\dot{\gamma})||_{\fH{}} \: ||k||_{\fH{}}
      \:d\beta(t)} \\
 &< &  \frac{\epsilon}{3\sigma(C)^{1/2}}, \hspace{1cm} 
 \text{ (by applying (\ref{IneqOmega}) and (\ref{normalisedg})).}
 \end{eqnarray*} 
Thus putting 
$$F_{\gamma}:=P(g\otimes F(\dot{\gamma}))$$
gives the desired inequality (\ref{POntoLemmaIneq}), and then $F_{\gamma}\in \mathrm{Im}(P)$.

We show that $F_{\gamma}$ is continuous with compact support. We first 
show continuity. 
Note: for $g$ as in (\ref{normalisedg}) we have 
$$\int_{\gudu}{g(c(\dot{\eta})t)\rho(c(\dot{\eta})t)^{-1/2}}\:d\beta(t)
=\int_{\gudu}{g(\zeta t)\rho(\zeta t)^{-1/2}}\:d\beta(t),$$
for all $\zeta\in \dot{\eta}$, because then 
$c(\dot{\eta})\gudu=\eta\gudu=\zeta\gudu$. 
Suppose that $\dot{\eta_i}\to \dot{\eta}$ in $G_u /\gudu$ and $\delta>0$. 
We show that $F_{\gamma}(\dot{\eta_i})\to F_{\gamma}(\dot{\eta})$.
Since (i) 
$G_u$ is a right $\gudu$-space, (ii) the quotient map $q_{u}:G_u\to G_u/\gudu$ is open by 
Lemma \ref{OpenQuotientMap}, and (iii)
$\{\eta_i\}$ is a sequence such that $q_{u}(\eta_i)=\dot{\eta_i}\to q_u(\eta)=\dot{\eta}$,
it follows from Lemma 3.38 in \cite{CrossedProd} that there is a subsequence $\{\eta_{i_j}\}$ and a sequence $\{s_{i_j}\}\subset \gudu$ such that $\eta_{i_j}s_{i_j}\to \eta$. We will only 
use the subsequence. Thus we relabel all $i_j$-indices to $i$ only. Then since $g$
and $\rho$ are continuous, so is their pointwise product. Hence
 there is an $i_0$ such that for all $i>i_0$ we have
 $$\left| g(\eta_{i}s_{i}t)\:\rho(\eta_{i}s_{i}t)^{-1/2}
	- g(\eta t)\:\rho(\eta t)^{-1/2}\right|  < \frac{\delta}{
	||F(\dot{\gamma})||_{\fH{}}\:\beta(\supp(g))}.$$
Then 
$$  \int_{\gudu}{\left|
	g(\eta_{i}s_{i}t)\:\rho(\eta_{i}s_{i}t)^{-1/2}
	- g(\eta t)\:\rho(\eta t)^{-1/2}\right| \:d\beta(t)}  
	< \frac{\delta}{||F(\dot{\gamma})||_{\fH{}}}. $$
Then 
\begin{eqnarray*}
&& || F_{\gamma}(\dot{\eta_i})-F_{\gamma}(\dot{\eta}) ||_{\fH{}} \\
&=&\left|\left|\: \int_{\gudu}{g(c(\dot{\eta_i})t)\:\rho(c(\dot{\eta_i})t)^{-1/2}
	\omega_{t}(F(\dot{\gamma}))\: d\beta(t)}
	- \int_{\gudu}{g(c(\dot{\eta})t)\rho(c(\dot{\eta})t)^{-1/2}
	\omega_{t}(F(\dot{\gamma}))\: d\beta(t)} \:\right|\right|_{\fH{}} \\
&=&\left|\left|\: \int_{\gudu}{g(c(\dot{\eta_{i}s_{i}})t)\rho(c(\dot{\eta_{i}s_{i}})t)^{-1/2}
	\omega_{t}(F(\dot{\gamma}))\: d\beta(t)}
	- \int_{\gudu}{g(c(\dot{\eta})t)\rho(c(\dot{\eta})t)^{-1/2}
	\omega_{t}(F(\dot{\gamma}))\: d\beta(t)} \:\right|\right|_{\fH{}} \\
&=&\left|\left|\: \int_{\gudu}{g(\eta_{i}s_{i}t)\:\rho(\eta_{i}s_{i}t)^{-1/2}
	\omega_{t}(F(\dot{\gamma}))\: d\beta(t)}
	- \int_{\gudu}{g(\eta t)\:\rho(\eta t)^{-1/2}
	\omega_{t}(F(\dot{\gamma}))\: d\beta(t)} \:\right|\right|_{\fH{}} \\
&=&\left|\left|\: \int_{\gudu}{\left(g(\eta_{i}s_{i}t)\:\rho(\eta_{i}s_{i}t)^{-1/2}
	- g(\eta t)\:\rho(\eta t)^{-1/2} \right)
	\omega_{t}(F(\dot{\gamma}))\: d\beta(t)} \:\right|\right|_{\fH{}} \\
&\leq & ||F(\dot{\gamma})||_{\fH{}}\: \int_{\gudu}{\left|
	g(\eta_{i}s_{i}t)\:\rho(\eta_{i}s_{i}t)^{-1/2}
	- g(\eta t)\:\rho(\eta t)^{-1/2}\right| 
	\:d\beta(t)} \\
&<& \delta.
\end{eqnarray*}
Thus $F_{\gamma}$ is continuous. 

Now we consider the $\supp(F_{\gamma})$. Since the quotient map 
$q_u:G_u\to G_u/\gudu$ is continuous, 
it follows that $q_u(\supp(g))$ is compact in $G_u/\gudu$. Since 
$q_u(\supp(g))$ is compact and 
$G_u/\gudu$ is Hausdorff (Lemma \ref{quotientHausdorff}), it 
follows that $q_u(\supp(g))$ is closed in $G_u/\gudu$. Thus, to 
show that $\supp(F_{\gamma})$ is compact, it will
suffice to show that  $\supp(F_{\gamma})\subset q_u(\supp(g))$.
Suppose that $\dot{\eta}$ is such that $F_{\gamma}(\dot{\eta})\neq 0$. Then 
$$F_{\gamma}(\dot{\eta})=\int_{\gudu}{g(c(\dot{\eta})t)\rho(c(\dot{\eta})t)^{-1/2}
	 \: \omega_{t}(F(\dot{\gamma}))\:d\beta(t)} \neq 0.$$
Since $g$ and $\rho$ are positive and non-zero, it follows that 
$g(c(\dot{\eta})t)\rho(c(\dot{\eta})t)^{-1/2}\neq 0$
for some $t\in \gudu$. That is, $c(\dot{\eta})t\in\supp(g)$ for some $t\in \gudu$. Note 
$q_u(c(\dot{\eta})t)=q_u(c(\dot{\eta}))=\dot{\eta}$. Thus 
$\supp(F_{\gamma})\subset q_u(\supp(g))$, and so $F_{\gamma}$ has 
compact support. Hence $F_{\gamma}$ has all the required properties and the lemma's proof is finished.
\end{proof}

We now continue with the proof of Proposition \ref{mainEquivReps}.
By Lemma \ref{FgammainImage} we can find for every $\dot{\gamma}\in\supp(F)$ a 
$$F_{\gamma}=P(g_{\dot{\gamma}}\otimes F(\dot{\gamma}))\in C_{c}(G_u /\gudu,\fH{})$$
such that 
	\begin{equation} \label{Ineq1of3}
	||\: F_{\gamma}(\dot{\gamma})-F(\dot{\gamma})\: ||_{\fH{} }
	< \frac{\epsilon}{3\sigma(C)^{1/2}}.
	\end{equation}
Since $F$ and every $F_{\gamma}$ are continuous, there 
is for every $\dot{\gamma}\in D$ an open neighbourhood $N_{\dot{\gamma}}$ 
of $\dot{\gamma}$ such that $N_{\dot{\gamma}}\subset C$, 
and for all $\dot{\eta}\in N_{\dot{\gamma}}$ we have
	\begin{equation}  \label{Ineq2of3}
	||\: F_{\gamma}(\dot{\eta})-F_{\gamma}(\dot{\gamma})\: ||_{\fH{} }
	\leq  \frac{\epsilon}{3\sigma(C)^{1/2}},
	\end{equation}
and 
	\begin{equation}  \label{Ineq3of3}
	||\: F(\dot{\gamma})-F(\dot{\eta})\: ||_{\fH{} }
	\leq  \frac{\epsilon}{3\sigma(C)^{1/2}}.
	\end{equation}

Applying inequalities (\ref{Ineq1of3}), (\ref{Ineq2of3}) and (\ref{Ineq3of3}) we 
see that, for all $\dot{\eta}\in N_{\dot{\gamma}}$,
\begin{eqnarray*}
 ||\: F_{\gamma}(\dot{\eta})-F(\dot{\eta})\: ||_{\fH{} } 
&\leq & ||\: F_{\gamma}(\dot{\eta})-F_{\gamma}(\dot{\gamma})\: ||_{\fH{} } +
		||\: F_{\gamma}(\dot{\gamma})-F(\dot{\gamma})\: ||_{\fH{} } +
		||\: F(\dot{\gamma})-F(\dot{\eta})\: ||_{\fH{} }  \\
&<&  \frac{\epsilon}{\sigma(C)^{1/2}}.
\end{eqnarray*}
Since $D$ is compact and contained in $\cup_{\dot{\gamma}\in D}N_{\dot{\gamma}}$,
there are $\dot{\gamma}_{1}, \dot{\gamma}_{2}, \ldots, \dot{\gamma}_{n}$ such that 
$D\subset\cup_{i=1}^{n}N_{\dot{\gamma_{i}}}$. Also, since $G_u/\gudu$ 
is Hausdorff and every $N_{\dot{\gamma_{i}}}\subset C$, it follows that every 
$N_{\dot{\gamma_{i}}}$ has compact closure. 
Then there is a partition of unity  $\{\psi_{i}\}_{i=1}^{n}$ subordinate to 
$\{N_{\gamma_{i}}\}_{i=1}^{n}$ such that for $i=1,2,\ldots, n$
\begin{enumerate}
\item[(i)] $\psi_{i}\in C_{c}(G_u/\gudu),$
\item[(ii)] $0\leq \psi_{i}(\dot{\gamma}) \leq 1$ for all $\dot{\gamma}\in G_u/\gudu$,
\item[(iii)] $\supp(\psi_{i})\subset N_{\dot{\gamma}}$,
\item[(iv)] $\Sigma_{i=1}^{n} \psi(\dot{\gamma})=1$ for all $\dot{\gamma}\in D$, and
\item[(v)] $\Sigma_{i=1}^{n} \psi(\dot{\gamma})\leq1$ for all $\dot{\gamma}\notin D$
\end{enumerate} 
Let $\dot{\gamma}\in D$ and suppose that $\dot{\gamma}\in N_{\gamma_{i}}$.
Let $g_{\dot{\gamma}}$
be as in (\ref{normalisedg}). For $\eta\in G_u$ put
$$g_{\dot{\gamma}}'(\eta):=\psi_{i}(q_u(\eta))g_{\dot{\gamma}}(\eta).$$
Then $g_{\dot{\gamma}}'\in C_{c}(G_u /\gudu)$ 
and 
$$(\psi_{i} F_{\gamma_{i}})(\dot{\eta})=P(g_{\dot{\gamma}}'\otimes F(\dot{\gamma_{i}}))(\dot{\eta}).$$
Hence $\psi_{i} F_{\gamma_{i}}\in \textrm{Im}(P)$, and so is 
$\Sigma_{i=1}^{n}\psi_{i} F_{\gamma_{i}}$.
Then for all $\dot{\eta}\in D$
\begin{eqnarray}
\left|\left| \:\Sigma_{i=1}^{n}\psi_{i}(\dot{\eta})F_{\gamma_{i}}(\dot{\eta})-
    F(\dot{\eta}) \right|\right|_{\fH{}} 
&=& \left|\left| \:\Sigma_{i=1}^{n}\psi_{i}(\dot{\eta})F_{\gamma_{i}}(\dot{\eta})-
    \Sigma_{i=1}^{n}\psi_{i}(\dot{\eta})F(\dot{\eta}) \:\right|\right|_{\fH{}}  \nonumber\\
&=& \left|\left|\:\left(\: \Sigma_{i=1}^{n}\psi_{i}(\dot{\eta})\:\right)\:
	\left(\: F_{\gamma_{i}}(\dot{\eta})- F(\dot{\eta})\:\right) 
	\:\right|\right|_{\fH{}}\nonumber \\
&= & \left|\left|F_{\gamma_{i}}(\dot{\eta})- F(\dot{\eta})\:\right|\right|_{\fH{}}\nonumber \\
&<& \frac{\epsilon}{\sigma(C)^{1/2}} \nonumber.
\end{eqnarray}
Since $\Sigma_{i=1}^{n}\psi_{i}(\dot{\eta})F_{\gamma_{i}}(\dot{\eta})-
F(\dot{\eta})$  vanishes outside of $C$ for every $\dot{\eta}\in D$, we have that
\begin{eqnarray*}
\left|\left| \:\Sigma_{i=1}^{n}\psi_{i}F_{\gamma_{i}}-F \right|\right|_{L^{2}}^2 
&=& \int_{G_u/\gudu}{\left|\left| \:\Sigma_{i=1}^{n}
	\psi_{i}(\dot{\eta})F_{\gamma_{i}}(\dot{\eta})-
    F(\dot{\eta}) \right|\right|_{\fH{}}^2 \: d\sigma(\dot{\eta}) } \\
    &<& \left(\frac{\epsilon}{\sigma(C)^{1/2}}\right)^2 \sigma(C) \\
    &=& \epsilon^2.
\end{eqnarray*}
Thus $f:=\Sigma_{i=1}^{n}\psi_{i}F_{\gamma_{i}}$ satisfies 
(\ref{UnitaryOntoIneq}). We have shown that $P$ is isometric and maps
onto a dense subspace. Hence we can  extend $P$ to a unitary operator 
from $\overline{C_{c}(G_u)\odot\fH{}}$ onto $L^2(G_u /\gudu,\fH{},\sigma)\overline{}$.

Lastly we show that $P$ intertwines the representations
$M_u$ and $L_u\circ V$.
Let $f\in C_{c}(G), g\in C_{c}(G_u)$, $\phi\in C_{0}(\gus)$ and $ h\in\fH{}$.
Recall that $\ind{\gudu}{G}{\pi_{\omega}}$ is defined through an 
action of $C_{c}(G)$ on the right $C_c(\gudu)$-pre-Hilbert module
$C_{c}(G_u)$.
 Specifically  the action is given by $f\cdot g= f\ast g$, and extends to give a 
 representation of 
$C^*(G)$. The representation $\ind{\gudu}{G}{\pi_{\omega}}$ acts on the completion of $C_{c}(G_u)\odot\fH{}$  with respect to 
the inner product
$$ (f\otimes h \mid g\otimes k) = (\pi_{\omega}(\langle g,f \rangle_{u}) h\mid k) .$$
The action is given by 
$$\ind{\gudu}{G}{\pi_{\omega}}(f)(g\otimes h)= f\cdot g \otimes h =f\ast g\otimes h.  $$ 
The extension $L_u$ of $\ind{\gudu}{G}{\pi_{\omega}}$ to $\mathcal{M}(C^*(G))$
is characterized by 
$$L_u(m)(f\cdot g \otimes h)=L_u(mf)(g\otimes h)=
\ind{\gudu}{G}{\pi_{\omega}}(mf)(g\otimes h)=(mf)\cdot g\otimes h,$$
for all $m\in \mathcal{M}(C^*(G))$. 
Recall that 
$$((V\phi)f)(\gamma)=\phi(r(\gamma))f(\gamma).$$
We want to show that 
\begin{equation} \label{intertwine}
 PL_u(V\phi)=M_u(\phi)P.
\end{equation}
Because the action of $C^*(G)$ on $\overline{C_{c}(G_u)}$ is non-degenerate, the 
set 
$$\{a\cdot x:a\in C^*(G), x\in \overline{C_{c}(G_u)} \} $$
is dense in $\overline{C_{c}(G_u)}$. By the continuity of the action,
$$\{f\cdot g:f\in C_{c}(G), g\in C_{c}(G_u) \} $$
is dense in $\overline{C_{c}(G_u)}$. It follows that for 
$f\in C_{c}(G), g\in C_{c}(G_u)$ and $ h\in\fH{}$, the elementary 
tensors $f\cdot g\otimes h$ span a dense subset of  
$\overline{C_{c}(G_u)\odot\fH{}}$. Hence, by the continuity of these operators, to see that 
$$ PL_u(V\phi)=M_u(\phi)P, $$
 it suffices to check that
$$(PL_u (V\phi))(f\cdot g\otimes h) 
=M_u (\phi)P(f\cdot g\otimes h).$$ 
Let $\dot{\gamma}\in G_u/\gudu$. Then 
\begin{eqnarray}
(PL_u (V\phi))(f\cdot g\otimes h)(\dot{\gamma}) 
 &=& P(L_u(V\phi))(f\cdot g\otimes h)(\dot{\gamma}) \nonumber \\
 &=& P(L_u(V\phi f)) (g\otimes h)(\dot{\gamma})\nonumber  \\
 &=& P((V\phi f)\cdot g\otimes h)(\dot{\gamma})\nonumber  \\
 &=& \int_{\gudu}{ ((V\phi f)\cdot g)(c(\dot{\gamma})t)\:\rho((c(\dot{\gamma})t))^{-1/2}
 \:\omega_t h \:d\beta(t)}  \nonumber\\
  &=& \int_{\gudu}{ ((V\phi f)\ast g)(c(\dot{\gamma})t)\:\rho((c(\dot{\gamma})t))^{-1/2}
 \:\omega_t h \:d\beta(t)}. \label{eq:Intertwine1}
\end{eqnarray}
Note that if $\eta\in G^{r(c(\dot{\gamma})t)}$ then 
$$r(\eta)=s(\eta^{-1})= r(c(\dot{\gamma})t)=r(\gamma).$$
 Then we have that 
\begin{eqnarray}
((V\phi f)\ast g)(c(\dot{\gamma})t) &=&  
\int_{G}{(V\phi f)(\eta) g(\eta^{-1}c(\dot{\gamma})t) \:d\lambda^{r(c(\dot{\gamma})t)}}
\nonumber \\
&=& \int_{G}{\phi(r(\eta)) f(\eta) g(\eta^{-1}c(\dot{\gamma})t) 
\:d\lambda^{r(c(\dot{\gamma})t)}} \nonumber\\
&=& \phi(r(\gamma))\int_{G}{ f(\eta) g(\eta^{-1}c(\dot{\gamma})t) 
\:d\lambda^{r(c(\dot{\gamma})t)}} \nonumber\\
&=& \phi(r(\gamma))\: (f\ast g)(c(\dot{\gamma})t) \label{eq:Intertwine2}.
\end{eqnarray} 
Now by  (\ref{eq:Intertwine2}), we have (\ref{eq:Intertwine1}) 
\begin{eqnarray*}
 &=& \int_{\gudu}{ \phi(r(\gamma))\: f\ast g(c(\dot{\gamma})t)
 (c(\dot{\gamma})t)\:\rho((c(\dot{\gamma})t))^{-1/2}
 \:\omega_t h \:d\beta(t)} \nonumber \\
 &=& \phi(r(\gamma))\int_{\gudu}{ (f\cdot g)(c(\dot{\gamma})t)
 \:\rho((c(\dot{\gamma})t))^{-1/2} \:\omega_t h \:d\beta(t)}  \nonumber \\
 &=&\phi(r(\gamma)) P((f\cdot g)\otimes h)(\dot{\gamma}) \\
 &=& M_u (\phi)P((f\cdot g)\otimes h)(\dot{\gamma}) \label{intertwinedensesubspace}.
\end{eqnarray*}
Hence $P$ intertwines the representations on a dense subspace. By continuity
we have $PL_u (V\phi)=M_u (\phi)P$, and the proof is finished.
\end{proof}

\section{CHARACTERIZING CCR GROUPOID $C^*$-ALGEBRAS} \label{sec:CCRmain}
In this section we prove our main theorem, Theorem \ref{mainCCRthm},  which 
shows that if a groupoid $C^*$-algebra is CCR, then the orbit space of the groupoid
is $T_1$ and the stabilizers of $G$ are CCR. We use the following following lemma.

\begin{lem} \label{ccrrepcontainiseqv}
	Let $\ca{A}$ be a CCR $C^*$-algebra. Suppose that $\pi,\tau$ are non-zero 
	irreducible representations
	of $\ca{A}$ acting on $\fH{\pi}$ and $\fH{\tau}$, respectively. 
	If $\ker\pi\subseteq\ker\tau$, then
	$\pi$ is unitarily equivalent to $\tau$.
\end{lem}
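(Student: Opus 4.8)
The plan is to exploit the defining property of CCR algebras: for any irreducible representation $\pi$ of $\ca{A}$, the image $\pi(\ca{A})$ equals the compact operators $\ca{K}(\fH{\pi})$. The strategy is to pass to the quotient by the common kernel and reduce the statement to the uniqueness (up to unitary equivalence) of the irreducible representation implementing a copy of the compact operators. First I would set $J:=\ker\pi$ and observe that, since $\ker\pi\subseteq\ker\tau$, both $\pi$ and $\tau$ descend to irreducible representations $\bar\pi$ and $\bar\tau$ of the quotient $\ca{A}/J$; here $\bar\pi$ is faithful by construction, while $\bar\tau$ has kernel $\ker\tau/J$.

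Next I would use that $\pi(\ca{A})=\ca{K}(\fH{\pi})$ because $\ca{A}$ is CCR and $\pi$ is irreducible. Since $\bar\pi$ is faithful, this identifies $\ca{A}/J$ with $\ca{K}(\fH{\pi})$ as a $C^*$-algebra. The key structural input is that $\ca{K}(\fH{\pi})$ is simple: it has no nontrivial closed two-sided ideals. Consequently, the kernel of $\bar\tau$, viewed as a closed ideal of $\ca{A}/J\cong\ca{K}(\fH{\pi})$, must be either all of $\ca{A}/J$ or $\{0\}$. Because $\tau$ is assumed non-zero, $\bar\tau$ is non-zero, so $\ker\bar\tau\neq\ca{A}/J$; hence $\ker\bar\tau=\{0\}$, i.e. $\bar\tau$ is also faithful. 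This forces $\ker\pi=\ker\tau$ and shows that $\bar\tau$ is a faithful irreducible representation of $\ca{K}(\fH{\pi})$.

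At this stage both $\bar\pi$ and $\bar\tau$ are faithful irreducible representations of the same $C^*$-algebra $\ca{K}(\fH{\pi})$, and I would invoke the classical fact that $\ca{K}(\fH{})$ admits, up to unitary equivalence, a \emph{unique} irreducible representation, namely its identity representation on $\fH{}$. Since $\bar\pi$ is literally the identity representation of $\ca{K}(\fH{\pi})$ on $\fH{\pi}$, any irreducible representation of $\ca{K}(\fH{\pi})$ is unitarily equivalent to it; in particular $\bar\tau$ is unitarily equivalent to $\bar\pi$. Transporting this equivalence back through the quotient map yields a unitary $U:\fH{\pi}\to\fH{\tau}$ intertwining $\pi$ and $\tau$, which is exactly the claim.

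The main obstacle is justifying the uniqueness of the irreducible representation of $\ca{K}(\fH{})$ and its unitary implementation, rather than the ideal-theoretic reduction, which is routine. I would secure this either by citing the standard representation theory of the compact operators (every irreducible representation of $\ca{K}(\fH{})$ is spatially equivalent to the identity representation) or by a direct argument: pick a rank-one projection $p\in\ca{K}(\fH{\pi})$, note $\bar\tau(p)$ is a non-zero projection whose range is one-dimensional by irreducibility together with the relation $p\ca{K}(\fH{\pi})p=\mathbb{C}p$, choose a unit vector $\xi$ spanning this range, and define $U$ on the dense subspace $\ca{K}(\fH{\pi})\xi_0$ (for a fixed unit vector $\xi_0$ in the range of $p$ inside $\fH{\pi}$) by $U(a\xi_0):=\bar\tau(a)\xi$; one then checks $U$ is a well-defined isometry with dense range extending to a unitary that intertwines the two representations.
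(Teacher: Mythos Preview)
Your argument is correct. The route differs from the paper's in style rather than substance: the paper argues that in a CCR algebra every kernel of a non-zero irreducible representation is a maximal ideal (citing \cite[Corollary~4.1.11]{DixC}), whence $\ker\pi=\ker\tau$, and then invokes the GCR fact that irreducible representations with equal kernels are unitarily equivalent (citing \cite[Theorem~4.3.7]{DixC}). You instead unpack these citations directly: passing to $\ca{A}/\ker\pi\cong\ca{K}(\fH{\pi})$, you use simplicity of the compacts to force $\ker\pi=\ker\tau$, and then the uniqueness of the irreducible representation of $\ca{K}(\fH{})$ to obtain the unitary equivalence. Your approach is more self-contained and makes transparent exactly which feature of CCR is being used (namely that the quotient by a primitive ideal is a copy of the compacts); the paper's approach is shorter on the page but relies on two black-box references to Dixmier. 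Both reach the same intermediate conclusion $\ker\pi=\ker\tau$ and the same endpoint.
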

  
  \begin{proof}
	Since  $\ca{A}$ is CCR and $\pi$ and $\tau$ are non-zero 
	irreducible representations, we have that $\ker\pi$ and 
	$\ker\tau$ are maximal closed two-sided ideals in $\ca{A}$ by 
	\cite[Corollary 4.1.11]{DixC}. 
	Thus, $\ker\pi\subseteq\ker\tau$ implies that $\ker\pi=\ker\tau$. 
	
	Because  $\ca{A}$ is CCR it is also GCR. Since $\ker\pi=\ker\tau$ 
	and  $\pi$ and $\tau$  are irreducible, it follows that
	 $\pi$ is unitarily equivalent to $\tau$
	by \cite[Theorem 4.3.7]{DixC}.
  \end{proof}

We fix some notation. Let $u\in\gus$. The trivial representation 
$1_u:\gudu\to \mathbb{C}$ is given by $1_u(t)=1$, for every $t\in \gudu$. If
$\pi_{1_u}:C^*(\gudu) \to \mathbb{C}$ 
denotes the integrated form of the $1_u$, then we let 
$$l^u:=\ind{\gudu}{G}{\pi_{1_u}}$$
denote the induced representation of $C^*(G)$. 
Similarly, if 
$C$ is any subgroup in $G$, say $C\subset \gudu$, we let $l^C$ denote the 
 representation of $C^*(G)$ 
induced from the (integrated form of the) trivial representation of $C$. Note: 
since $l^C$ is not induced from a stabilizer, it may not be 
irreducible.

Lemma \ref{convInducedTrivialReps} below essentially appears as part the proofs of  
Clark's Lemma 5.4 in  \cite{Cla07} for groupoids and Williams'
Lemma 4.9 in \cite{Wil81} for transformation groups.
Clark and Williams both prove a continuity result which uses an assumption that the stabilizers are amenable.  Here, in Lemma \ref{convInducedTrivialReps}, we separate out the part of their proofs that do not use amenability of the stabilizers.

\begin{lem} \label{convInducedTrivialReps}
Suppose that $\{u_{n}\}\subset[u]$ and $u_n\to v$ in $\gus$. Then there exists a closed subgroup $C\subset\gud{v}{v}$ such that $\ker(l^{u_n})\subset\ker(l^C)$ for every $n$.
\end{lem}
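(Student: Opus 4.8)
The plan is to track how the sequence $u_n \to v$ interacts with the fibering $G_v \to G_v/\gud{v}{v}$ and to extract a limiting subgroup $C \subset \gud{v}{v}$ from the ``directions of approach''. First I would choose, for each $n$, an element $\gamma_n \in G$ with $r(\gamma_n) = u_n$ and $s(\gamma_n) = u$ witnessing $u_n \sim u$ (possible since $u_n \in [u]$); then $\gamma_n^{-1}$ carries $u_n$ to $u$. The idea is that $\ker(l^{u_n})$ depends only on the orbit and on data at $u_n$, and as $u_n \to v$ these kernels should accumulate on a kernel associated to some subgroup sitting inside the stabilizer $\gud{v}{v}$ at the limit point. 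The subgroup $C$ should be built as a ``limit'' of the conjugated stabilizers, roughly $C = \{t \in \gud{v}{v} : t = \lim \eta_n \text{ for some } \eta_n \in \gud{u_n}{u_n}\text{-related elements}\}$, made precise using that $G$ is second-countable and the range/source maps are continuous and open.

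The key technical step is to translate kernel containment $\ker(l^{u_n}) \subset \ker(l^C)$ into a statement about weak containment of representations or, equivalently, about the induced representations $l^{u_n}$ ``converging'' to (something weakly contained in) $l^C$. Concretely I would argue: for $f \in C_c(G)$, the function $u \mapsto \|l^{u}(f)\|$ (or an appropriate matrix-coefficient version using the explicit $L^2$-models from Proposition~\ref{mainEquivReps}) is lower semicontinuous in the base point, so that an element killed by every $l^{u_n}$ is, in the limit $u_n \to v$, killed by the representation built over the limiting subgroup $C$. The explicit description of the induced representation on $L^2(G_{u_n}/\gud{u_n}{u_n}, \mathbb{C}, \sigma_{u_n})$ (trivial coefficient case, so $\fH{} = \mathbb{C}$) makes this tractable: matrix coefficients are integrals of $f$ against Bruhat-type sections, and one controls their behaviour as the base point moves by a continuity/compactness argument, using Lemma~\ref{Bruhat} to normalize the measures.

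The main obstacle I expect is the construction of $C$ and the verification that it is genuinely a closed \emph{subgroup} of $\gud{v}{v}$, not merely a closed subset. The natural candidate as a set of subsequential limits of stabilizer elements need not be closed under multiplication a priori, so I would need a diagonal/subsequence argument exploiting second-countability together with properness of the $\gud{v}{v}$-action on $G_v$ (Lemma~\ref{quotientHausdorff} and its proof) to show the limit set is a closed subgroup. A clean way to finesse this is to \emph{define} $C$ as the closed subgroup generated by all such subsequential limits, or to take $C = \bigcap_n \overline{(\text{relevant conjugates})}$, and then check that this $C$ still satisfies the kernel containment; since $l^C$ is induced from the trivial representation of $C$, enlarging $C$ only shrinks the subgroup's dual contribution in a controlled way, so choosing $C$ large enough to be a closed subgroup while keeping $\ker(l^{u_n}) \subset \ker(l^C)$ is the delicate balance.

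Once $C$ is fixed, the remaining steps are routine: verify that $C \subset \gud{v}{v}$ is closed (hence a legitimate closed subgroupoid from which one may induce, as set up in Section~\ref{ch:Prelim}), confirm that the trivial representation of $C$ induces to a representation $l^C$ of $C^*(G)$, and finally assemble the semicontinuity estimates to conclude $\ker(l^{u_n}) \subset \ker(l^C)$ for every $n$ simultaneously. I would present the limiting-subgroup construction first, then the kernel-containment estimate, isolating the measure-normalization from Lemma~\ref{Bruhat} as the computational heart of the argument.
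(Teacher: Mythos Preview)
Your outline has the right shape---extract a limiting closed subgroup $C\subset G^{v}_{v}$ and then pass to the limit in matrix coefficients---but it misses the two technical tools that make the argument go through, and the workarounds you propose are where it would break.

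\textbf{Constructing $C$.} The paper does not build $C$ by hand from subsequential limits of elements. It uses the Fell topology on the space $\Sigma$ of closed subgroups of $G$: the space $\Sigma\cup\{\emptyset\}$ is compact Hausdorff, so $\{G^{u_n}_{u_n}\}$ has a subsequence converging to some $C\in\Sigma\cup\{\emptyset\}$. The Fell limit of closed subgroups is automatically a closed subgroup, and the characterization of Fell convergence immediately gives $v\in C$ (since $u_n\in G^{u_n}_{u_n}$ and $u_n\to v$), hence $C\subset G^{v}_{v}$. This entirely bypasses your ``main obstacle''. Your fallback of taking the closed subgroup generated by the set of limit points is dangerous: enlarging $C$ changes $l^{C}$ in a way you do not control, and there is no reason the kernel containment survives. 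Without the Fell-topology compactness this step is a genuine gap.

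\textbf{Kernel containment.} The convergence step is not via lower semicontinuity of $u\mapsto\|l^{u}(f)\|$, nor via the $L^{2}$-models of Proposition~\ref{mainEquivReps} and Lemma~\ref{Bruhat}; those are set up for a \emph{fixed} base point and the measures $\sigma_{u}$ do not vary in any usable way as $u_n\to v$. Instead the paper uses Renault's result that there is a continuous choice of Haar measures $H\mapsto\beta^{H}$ on $\Sigma$ (so $H\mapsto\int_{H}f\,\Delta_{H}^{-1/2}\,d\beta^{H}$ is continuous for $f\in C_c(G)$). Since the inner product in the induced picture is $(l^{H}(f)g\mid h)_H=\int_{H}(h^{*}\ast f\ast g)\,\Delta_{H}^{-1/2}\,d\beta^{H}$ with $g,h$ taken in $C_c(G)$ (restrictions to each $G_{u_n}$ and to $G_v$), Fell convergence $G^{u_n}_{u_n}\to C$ gives $(l^{u_n}(f)g\mid h)_{u_n}\to(l^{C}(f)g\mid h)_{C}$ directly. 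Finally, $\ker(l^{u_n})$ is the same for every $n$ because all $u_n$ lie in $[u]$ and $l^{u_n}\sim l^{u_m}$; this is why the containment holds for all $n$ at once, not because of any uniformity in the estimates.
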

\begin{proof}
Since $\{u_{n}\}\subset[u]$, we have that $l^{u_n}\sim l^{u_m}$ for all $m,n\in\mathbb{N}$, \cite[Lemma 5.1]{Cla07}. Let $\Sigma$ denote the set of all closed subgroups of $G$. Then 
$\Sigma \cup \{\emptyset\}$ is a compact Hausdorff space in the
Fell topology. 
Thus $\{\gud{u_n}{u_n}\}$ has a convergent subsequence. Say, after 
relabelling, that $\gud{u_n}{u_n}\to C$ in $\Sigma \cup \{\emptyset\}$.
We claim that $C\neq\emptyset$ and that  $C\subset\gud{v}{v}$.
To see this, recall from the characterization 
of convergence in the Fell topology (Lemma H.2 in \cite{CrossedProd}) that since 
$\gud{u_n}{u_n}\to C$ and $u_n\in  \gud{u_n}{u_n}$ with $u_n\to v$, 
it follows that $v\in C$.  
Thus $C\neq\emptyset$. Since $v\in C$ and group identities 
are unique, we get that $C\subset\gud{v}{v}$.

We first show for all $a\in C^*(G)$ and `appropriate' $g,f\in C_c(G)$, 
$$ (l^{u_n}(a)g\mid h)_{u_n} \to (l^C(a)g\mid h)_C $$
weakly. We will shortly explain what we mean by `appropriate', and then show that we can find such $g$ and $h$.

From \cite[Corollary 1.4]{Ren91}
there is a continuous choice of Haar measures on $\s$. Specifically, the 
map from $\s$ into $\mathbb{C}$ given by  
\begin{equation} \label{contChoiceHaar2}
H\mapsto \int_{H}{f(t)\Delta_{H}(t)\:d\beta^{H}(t)} 
\end{equation} 
is continuous for every $f\in C_{c}(G)$ (see also the proof of Lemma 3.1 in \cite{Wyk18} for details on the inclusion of the modular function in the integrand). 
Let $w\in\gus$ and let $H$ be a closed subgroup of $\gud{w}{w}$. Let  $f\in C_{c}(G)$ and $g,h\in C_{c}(G_w)$.
Recall that the representation  $l^H$  acts on the completion of $C_{c}(G_w)$
with respect to the inner product
$$(l^H(f)g\mid h)_H:=\int_{H}{(h^{*}\ast f\ast g)(t)\Delta_{H}(t)^{-1/2}\:d\beta^{H}(t)}. $$
We claim that since 
$\gud{u_n}{u_n}\to C$, it follows from the continuity of (\ref{contChoiceHaar2}) that 
\begin{equation}\label{innerconvCCR}
(l^{u_n}(f)g\mid h)_{u_n}\to (l^C(f)g\mid h)_C.
\end{equation} 
Now, by `appropriate' $g$ and $h$ above we mean that $g$ an $h$ have to be defined on $G_{u_n}$ for all $n$ and on $C\subset \gud{v}{v}$. Since the 
source map is continuous all the sets $G_w$ $(w\in\gus)$ are closed. Thus
they are also locally compact in the relative topology from $G$. 
Then by Lemma 1.42 in \cite{CrossedProd} we can extend any continuous compactly
supported function on $G_w$ to $C_{c}(G)$. Thus every $g\in C_c(G_w)$ is the restriction of some $\tilde{g}\in C_c(G)$ and it suffices to take $f,g,h\in C_{c}(G)$. Then, since $\gud{u_n}{u_n}\to C$, we see that 
(\ref{innerconvCCR}) follows from the continuity of (\ref{contChoiceHaar2}).

Now let $\epsilon>0$, $g,h\in C_{c}(G)$ and $a\in C^*(G)$. Then 
for any fixed $n$ there is an
$f\in C_{c}(G)$ such that 
\begin{equation} \label{anotherApproxCCR}
||a-f||_{C^*(G)}< \min\left\lbrace\frac{\epsilon}{3||g||_{u_n}||h||_{u_n}},
\frac{\epsilon}{3||g||_{C}||h||_{C}}\right\rbrace.
\end{equation}
For $n$ large enough,
(\ref{innerconvCCR}) and the Cauchy-Schwarz inequality 
together with (\ref{anotherApproxCCR}) imply that 
\begin{eqnarray*}
	&&\mid (l^{u_n}(a)g\mid h)_{u_n} - (l^C(a)g\mid h)_C \mid \\
	&\leq & \mid(l^{u_n}(a)g\mid h)_{u_n}-(l^{u_n}(f)g\mid h)_{u_n}\mid + 
	\mid(l^{u_n}(f)g\mid h)_{u_n} - (l^C(f)g\mid h)_C\mid  +  \\
	&& \hspace{0.5cm}
	\mid(l^C(f)g\mid h)_C - (l^C(a)g\mid h)_C\mid  \\
	&\leq & ||l^{u_n}(a-f)||\:||g||_{u_n}\:||h||_{u_n} + 
	\frac{\epsilon}{3}  + 
	||l^C(f-a)|| \:||g||_{C}\:|| h||_{C} \\
	&\leq & ||a-f||_{C^*(G)}\:||g||_{u_n}\:||h||_{u_n} + 
	\frac{\epsilon}{3}  + 
	||a-f||_{C^*(G)} \:||g||_{C}\:|| h||_{C} \hspace{1cm} \\
	&<& \epsilon.
\end{eqnarray*} 
Hence we have now verified (\ref{innerconvCCR}).

Next we show that $\ker(l^{u_n})\subset\ker(l^C)$ for every $n$.  By Lemma 5.1 in \cite{Cla07} the map from $\gus/G$ into the spectrum
$C^*(G)^{\wedge}$  given by $[u]\mapsto [l^u]$ is well-defined. 
Since $u_n\in[u]$ for all $n$, we have 
$l^{u_m}\sim l^{u_n}$ for all $m$ and $n$. Thus  $\ker(l^{u_m})=\ker(l^{u_n})$
for all $m$ and $n$. Let $a\in\ker(l^{u_n})$ and let $g,h\in C_{c}(G)$. Then $(l^{u_n}(a)g\mid h)_{u_n}= 0$. Since 
$(l^{u_n}(a)g\mid h)_{u_n} \to (l^C(a)g\mid h)_C$ for every $g,h\in C_{c}(G)$, it follows that $(l^C(a)g\mid h)_C = 0$ for every $f,g,h\in C_{c}(G)$. Hence $l^C(a)=0$ and thus $a\in\ker(l^C)$, completing the proof.
\end{proof}

Before we prove our main result (Theorem \ref{mainCCRthm} below)
we need a last lemma that will be vital. 
This is Lemma \ref{lem:EqReps} below and is a vector-valued version of Lemma 4.15 from \cite{Wil81}. Lemmas \ref{lem:EqRepsPrequal} and \ref{lem:EqReps} and their proofs were shown to us by Dana Williams. We begin with the set-up needed for these two lemmas. 

Suppose that $Y$ is a second-countable locally compact Hausdorff space,  $\mu$ is Radon measure on $Y$ and $\fH{}$ a separable Hilbert space. For $y\in Y$, $h\in L^2(Y,\fH{},\mu)$ and each $\psi\in L^{\infty}(Y)$ we obtain a multiplication operator $L_{\psi}$ in $B(L^2(Y,\fH{},\mu))$ 
defined  by 
\begin{equation*}
L_{\psi}(h)(y):=\psi(y)h(y).
\end{equation*}
Such operators are called diagonal operators. 

Suppose that $X$, $Y$
and $Z$ are second-countable locally compact Hausdorff spaces. Let $\mu_Y$ and $\mu_Z$ be Radon measures 
on $Y$ and $Z$, respectively. So $\mu_Y$ and $\mu_Z$ correspond to  
linear functionals on $C_{c}(Y)$ and $C_{c}(Z)$, respectively, via the Riesz 
representation theorem \cite[Theorem 2.14]{RudRC}. Let 
$i:Y\to X$ and $j:Z\to X$  be continuous injections and let $\fH{Y}$ and $\fH{Z}$ be separable Hilbert spaces.
Let $M_Y:C_{0}(X) \to B(L^2(Y,\fH{Y},\mu_Y))$ be given by 
$$(M_Y(\phi)f)(y):=\phi(i(y))f(y). $$ Similarly let 
$M_Z:C_{0}(X) \to B(L^2(Z,\fH{Z},\mu_Z))$ be given by 
$$(M_Z(\phi)g)(z):=\phi(j(z))g(z). $$

\begin{lem} \label{lem:EqRepsPrequal}
	Let $\mathcal{L}$ denote the algebra of all diagonal operators in $B(L^2(Y,\fH{Y},\mu_Y))$ and $\mathcal{L}'$ its commutant. If $T\in M_Y(C_0(X))'$ then $T\in \mathcal{L}'$. 
\end{lem}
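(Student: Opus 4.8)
The plan is to pass to commutants and argue with von Neumann algebra theory. First I would record the trivial inclusion: each $M_Y(\phi)$ is the diagonal operator $L_{\phi\circ i}$ with $\phi\circ i\in C_b(Y)\subseteq L^\infty(Y)$, so $M_Y(C_0(X))\subseteq\mathcal{L}$, whence $\mathcal{L}'\subseteq M_Y(C_0(X))'$; the content of the lemma is the reverse inclusion $M_Y(C_0(X))'\subseteq\mathcal{L}'$. Since $M_Y(C_0(X))$ is a self-adjoint set, its commutant equals $W'$, where $W:=M_Y(C_0(X))''$ is the von Neumann algebra it generates. Note that $\mathcal{L}$ is itself a von Neumann algebra: it is the image of $L^\infty(Y,\mu_Y)$ under the normal $*$-isomorphism $\psi\mapsto L_\psi$, a homeomorphism from the weak-$*$ topology on $L^\infty(Y,\mu_Y)$ onto the weak operator topology on $\mathcal{L}$. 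As both $\mathcal{L}$ and $W$ are von Neumann algebras, $W'\subseteq\mathcal{L}'$ is equivalent (after taking commutants) to $\mathcal{L}\subseteq W$, so the whole lemma reduces to showing that every diagonal operator $L_\psi$, $\psi\in L^\infty(Y,\mu_Y)$, lies in $W$.

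To that end I would study $\mathcal{A}:=\{\psi\in L^\infty(Y,\mu_Y):L_\psi\in W\}$. Because $\psi\mapsto L_\psi$ is a normal isomorphism onto $\mathcal{L}$ and $W$ is weakly closed, $\mathcal{A}$ is a weak-$*$ closed $*$-subalgebra of $L^\infty(Y,\mu_Y)$ containing $\phi\circ i$ for every $\phi\in C_0(X)$. First I would verify $\mathcal{A}$ is unital: choosing an increasing approximate unit $\{\phi_n\}\subseteq C_0(X)$ with $0\le\phi_n\le1$ and $\phi_n\to1$ pointwise (possible since $X$ is $\sigma$-compact), dominated convergence gives $L_{\phi_n\circ i}\to\mathrm{id}$ strongly, so $1\in\mathcal{A}$. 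It then remains to prove $\mathcal{A}=L^\infty(Y,\mu_Y)$.

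For this the key input is injectivity of $i$: since $C_0(X)$ separates the points of $X$, the family $\{\phi\circ i:\phi\in C_0(X)\}$ separates the points of $Y$. As $X$ is second countable, $C_0(X)$ is separable, so I can extract a countable subfamily $\{\phi_k\circ i\}$ that still separates points. Being a second countable locally compact Hausdorff space, $Y$ is Polish, hence standard Borel, and a countable point-separating family of bounded Borel functions generates the full Borel $\sigma$-algebra of $Y$. Since a unital weak-$*$ closed $*$-subalgebra of $L^\infty(Y,\mu_Y)$ is of the form $L^\infty(Y,\Sigma_0,\mu_Y)$ for a sub-$\sigma$-algebra $\Sigma_0$ (modulo $\mu_Y$-null sets), and the $\sigma$-algebra generated by our countable family is all of the Borel $\sigma$-algebra, I conclude $\mathcal{A}=L^\infty(Y,\mu_Y)$. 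Thus $\mathcal{L}\subseteq W$, and taking commutants yields $M_Y(C_0(X))'=W'\subseteq\mathcal{L}'$, as desired.

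The main obstacle is the measure-theoretic step in the last paragraph: upgrading mere point-separation of $\{\phi\circ i\}$ to generation of the full Borel $\sigma$-algebra modulo $\mu_Y$-null sets, and hence to weak-$*$ density in the whole multiplication algebra $L^\infty(Y,\mu_Y)$. This is exactly where injectivity of $i$ and second countability of $Y$ are indispensable, and it is the only point where more than formal commutant manipulations are needed.
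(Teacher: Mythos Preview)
Your argument is correct and takes a genuinely different route from the paper. The paper works directly at the operator level: given $\psi\in L^\infty(Y)$ it manufactures a sequence $\phi_n\in C_c(X)$ with $M_Y(\phi_n)\to L_\psi$ strongly, exploiting that $Y=\bigcup_n K_n$ is $\sigma$-compact and that $i|_{K_n}$ is a homeomorphism onto the compact set $i(K_n)\subset X$, so that a Tietze-type extension produces the $\phi_n$; dominated convergence then gives the strong convergence. You instead pass to double commutants, reduce the lemma to the inclusion $\mathcal{L}\subseteq W:=M_Y(C_0(X))''$, and establish this by identifying the weak-$*$ closed unital $*$-subalgebra of $L^\infty(Y,\mu_Y)$ generated by $\{\phi\circ i:\phi\in C_0(X)\}$ with all of $L^\infty(Y,\mu_Y)$, via the descriptive-set-theoretic fact that a countable point-separating family of Borel functions on a standard Borel space generates the Borel $\sigma$-algebra. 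The paper's approach is concrete and needs no structure theory beyond extension lemmas and dominated convergence; your approach makes the role of the hypotheses more transparent---injectivity of $i$ is exactly what gives point-separation, second countability is exactly what makes $Y$ standard Borel---and it avoids constructing explicit approximants, at the price of invoking the correspondence between weak-$*$ closed unital $*$-subalgebras of $L^\infty$ and sub-$\sigma$-algebras modulo null sets.
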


\begin{proof}
	Let $\mathcal{B}^b(Y)$ denote the bounded Borel functions on $Y$ and suppose $\psi \in \mathcal{B}^b(Y)$ and $T\in M_Y(C_0(X))'$. We have to prove that $(L_{\psi}T)h=(TL_{\psi})h$ for every $h\in L^2(Y,\fH{Y},\mu_Y)$. By continuity  it will suffices to find a sequence $\{\phi_n\}\subset C_0(X)$ such that $M_Y(\phi_n)\to L_{\psi}$ in the strong operator topology.  
	
	We may assume $\psi(y) \leq 1$ for every $y\in Y$. Since $Y$ is second-countable and locally compact we can write $Y=\bigcup_n K_n$, where each $K_n$ is a compact subset of $Y$ with $K_n$ contained in the interior of $K_{n+1}$. Since $i:Y\to X$ is continuous it follows that $i(K_n)$ is compact in $X$ for every $n$. Hence there is a $\phi_n\in C_c(X)$ such that $0\leq | \phi_n(x)|\leq 1$ for every $x\in X$ and $\phi_n(i(y))=\psi(y)$ for every $y\in K_n$ (see for example Lemmas 1.41 and 1.42 in \cite{CrossedProd}).  
	
	Fix $y\in Y$, $h\in L^2(Y,\fH{Y},\mu_Y)$ and let $1_{K_n}$ denote the characteristic function on $K_n$. Since  $Y=\bigcup_n K_n$ and $K_n\subset K_{n+1}$ there is a $n_0\in \mathbb{N}$ such that $y\in K_n$ for every $n>n_0$. Then for all $n>n_0$ we have  
	$$|| \phi_n(i(y))h(y)-\psi(y)h(y)  ||=0. $$
	Hence $(\phi_n\circ i) h\to \psi h$ pointwise.
	Since $|| \phi_n(i(y))h(y) ||\leq ||h(y)||$ for all $y\in Y$ and $h\in L^2(Y,\fH{Y},\mu_Y)$, it follows from the Dominated Convergence Theorem that 
	$\psi h \in L^2(Y,\fH{Y},\mu_Y)$ and 
	\begin{eqnarray*}
		0&=& \lim_{n}\int_{Y}{|| \phi_n(i(y))h(y)-\psi(y)h(y)  ||^2 \: d\mu_Y(y)} \\
		 &=&  \lim_{n}||M_Y(\phi_n)h\to L_{\psi}h||^2.
	\end{eqnarray*}
	 Hence  $M_Y(\phi_n)\to L_{\psi}$ strongly, which is sufficient.
\end{proof}

\begin{lem} \label{lem:EqReps}
Let $X,Y,Z,\mu_Y,\mu_Z, i$, $j$, $\fH{Y}$, $\fH{Z}$ $M_Y$ and $M_Z$ be as above.
If \newline 
$i(Y)\cap j(Z)=\emptyset$, then 
$M_Y$ and $M_Z$ have no  unitarily equivalent sub-representations. 
\end{lem}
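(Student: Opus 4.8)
The plan is to argue by contradiction: suppose $M_Y$ and $M_Z$ have unitarily equivalent non-zero sub-representations, and derive a contradiction with $i(Y)\cap j(Z)=\emptyset$. The key objects are the spectral/multiplication structures of $M_Y$ and $M_Z$ as representations of the commutative $C^*$-algebra $C_0(X)$, together with the diagonalization result of Lemma \ref{lem:EqRepsPrequal}, which tells us that any operator commuting with $M_Y(C_0(X))$ automatically commutes with \emph{all} diagonal operators (those coming from bounded Borel functions on $Y$), and symmetrically for $Z$.

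\textbf{Setting up the contradiction.} First I would encode ``unitarily equivalent sub-representations'' concretely. A sub-representation of $M_Y$ corresponds to a projection $E$ in the commutant $M_Y(C_0(X))'$, and likewise a projection $E'$ in $M_Z(C_0(X))'$; unitary equivalence of the corresponding sub-representations means there is a partial isometry $W:L^2(Y,\fH{Y},\mu_Y)\to L^2(Z,\fH{Z},\mu_Z)$ with initial space $EL^2(Y,\dots)$ and final space $E'L^2(Z,\dots)$ intertwining the two restrictions, i.e. $W M_Y(\phi)=M_Z(\phi)W$ for all $\phi\in C_0(X)$. It suffices to show the only such $W$ is $0$. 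By Lemma \ref{lem:EqRepsPrequal} applied on both sides, $W$ intertwines not just the $C_0(X)$-actions but the actions of all bounded Borel functions: for $\psi$ a bounded Borel function on $X$, letting $L_\psi^Y$ and $L_\psi^Z$ denote the corresponding diagonal operators (multiplication by $\psi\circ i$ and $\psi\circ j$ respectively), I would show $W L_\psi^Y = L_\psi^Z W$. This is where the emptiness of $i(Y)\cap j(Z)$ becomes usable.

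\textbf{Exploiting disjointness.} The heart of the argument is to choose $\psi$ that separates $i(Y)$ from $j(Z)$. Since $X$ is second-countable locally compact Hausdorff, hence normal, and since $i(Y)$ and $j(Z)$ are disjoint, I would like a Borel function $\psi$ with $\psi\equiv 1$ on $i(Y)$ and $\psi\equiv 0$ on $j(Z)$. One must be slightly careful because $i(Y)$ and $j(Z)$ need not be closed — but they are disjoint Borel (indeed, as continuous injective images of second-countable locally compact spaces, they are Borel sets), so by the normality of the Borel structure one can take $\psi=1_{i(Y)}$, the characteristic function of the Borel set $i(Y)$, which is $1$ on $i(Y)$ and $0$ on $j(Z)$. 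Then $\psi\circ i\equiv 1$ on $Y$, so $L_\psi^Y=\mathrm{id}$, whereas $\psi\circ j\equiv 0$ on $Z$, so $L_\psi^Z=0$. The intertwining relation $W L_\psi^Y=L_\psi^Z W$ then reads $W=0$, which forces the sub-representations to be zero, the desired contradiction.

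\textbf{The main obstacle} will be justifying that the intertwiner $W$ genuinely commutes with the diagonal operator $L_\psi^Y$ for a \emph{characteristic} function of a possibly non-closed Borel set, since Lemma \ref{lem:EqRepsPrequal} is phrased as a statement about the commutant of $M_Y(C_0(X))$ and must be transferred to the intertwiner $W$ between two different spaces. The clean way is to pass to the single representation $M_Y\oplus M_Z$ of $C_0(X)$ on $L^2(Y,\fH{Y},\mu_Y)\oplus L^2(Z,\fH{Z},\mu_Z)$; then $W$ (extended by zero) sits inside the commutant of $(M_Y\oplus M_Z)(C_0(X))$, and Lemma \ref{lem:EqRepsPrequal}, applied to this combined representation, shows that this commutant lies in the commutant of the algebra of all diagonal operators on the direct sum. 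The diagonal operator attached to the bounded Borel function $\psi$ on $X$ is exactly $L_\psi^Y\oplus L_\psi^Z$, and the commutation of $W$ with it, read off the off-diagonal block, is precisely $W L_\psi^Y=L_\psi^Z W$. Verifying that the measurability and Borel-set choices go through for non-closed images, and that Lemma \ref{lem:EqRepsPrequal} applies verbatim to the direct-sum representation, is the only delicate point; the rest is a short computation.
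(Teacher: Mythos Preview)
Your argument is correct and takes a genuinely different route from the paper's. The paper proceeds by invoking direct integral decomposition theory: it uses Lemma~\ref{lem:EqRepsPrequal} to see that the projection onto an invariant subspace is decomposable, hence given by a Borel field $y\mapsto P(y)$ of projections, so that every sub-representation of $M_Y$ is of the form $M_Y^B$ for a non-null Borel set $B\subset Y$. It then uses inner regularity of the Radon measures to pass to compact sets $C\subset Y$ and $D\subset Z$, and finally applies Urysohn's lemma to the disjoint compacta $i(C)$ and $j(D)$ to produce a \emph{continuous} $f\in C_0(X)$ with $f\equiv 1$ on $i(C)$ and $f\equiv 0$ on $j(D)$, yielding the contradiction.

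Your approach sidesteps the direct integral machinery entirely: by passing to the direct sum $M_Y\oplus M_Z$ and applying Lemma~\ref{lem:EqRepsPrequal} there (equivalently, extending both representations to bounded Borel functions on $X$ via the Borel functional calculus), you promote the intertwining relation to $WL_\psi^Y=L_\psi^ZW$ for all bounded Borel $\psi$, and then the characteristic function $\psi=1_{i(Y)}$ kills $W$ in one stroke. What this buys you is a shorter, more elementary argument that avoids decomposable-operator theory; the price is that you must invoke the Lusin--Souslin theorem to ensure $i(Y)$ is Borel (since $i$ is merely a continuous injection of Polish spaces), and you must check that Lemma~\ref{lem:EqRepsPrequal} goes through for the direct sum when $\fH{Y}\ne\fH{Z}$ --- which it does, since its proof uses only pointwise and dominated convergence, not the constant-fibre structure. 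The paper's approach, by contrast, stays entirely within $C_0(X)$ for the separating function, trading the descriptive-set-theory input for heavier measure-theoretic decomposition.
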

\begin{proof}
Suppose $\mathcal{E}$ is a non-zero invariant subspace of $L^2(Y,\fH{Y},\mu_Y)$ for $M_Y$, and $P_{\mathcal{E}}$ is the orthogonal projection onto $\mathcal{E}$. Then $P_{\mathcal{E}}\in M_Y(C_0(X))'$. Hence by Lemma \ref{lem:EqRepsPrequal} we have $P_{\mathcal{E}}\in \mathcal{L}'$ and thus $P_{\mathcal{E}}$ is decomposible by \cite[Theorem F.21]{CrossedProd}. Hence $P_{\mathcal{E}}$ can be written as a direct integral of Borel operators $P:y\mapsto P(y), y\in Y$ (see for example Appendix F in \cite{CrossedProd} or \cite{DixVN} for definitions and more on decomposible operators and direct integrals). Since $P$ is determined $\mu_Y$-a.e. and $P=P^2=P^*$ we can modify $P$ on null sets and assume that $P(y)=P(y)^2=P(y)^*$ for every $y\in Y$. The set $E:=\{y\in Y:||P(y)||=1\}$ must have non-zero measure. Hence for any non-null Borel set $B\subset E$, the direct integral 
\begin{equation*}
P_{\mathcal{E}}^B = \int_Y^{\oplus}{1_B(y)P(y)\: d\mu_Y(y) }
\end{equation*}
is a non-zero projection in $\mathcal{L}'$. Therefore $P_{\mathcal{E}}^B$ determines a sub-representation  $M_Y^B$ of $M_Y^E$ determined by $P_{\mathcal{E}}$. The point is that every sub-representation of $M_Y$ has the form $M_Y^B$ for some non-null Borel set $B$.
The same conclusions apply to any non-zero invariant subspace $\mathcal{F}$  of $L^2(Z,\fH{Z},\mu_Z)$ for $M_Z$. 

Suppose that a sub-representation $M_Y^E$ of $M_Y$ is equivalent to a sub-representation $M_Z^F$ of $M_Z$. 
Since $\mu_Y$ is a Radon measure there is a compact set $C\subset E$ with $\mu_Y(C)$ non-zero. Since $C$ is a Borel set, $M_Y^C$ is a sub-representation of $M_Y^E$ and is equivalent to a sub-representation $M_Z^A$ of $M_Z^F$. Also, since $\mu_Z$ is a Radon measure there is a compact set $D\subset A$ with $\mu_Z(D)$ non-zero and $M_Z^D$ equivalent to a sub-representation of $M_Y^C$. By Urysohn's lemma (see for example Lemma 4.41 in \cite{CrossedProd}) and since $i(Y)\cap j(Z)=\emptyset$, we can find a $f\in C_0(X)$ such that $f(c)=1$ for all $c\in C$ and $f(d)=0$ for all $d\in D$. Then $M_Z^C$ is a non-zero operator but $M_Z^D$ is the zero operator. This contradicts the equivalence of  $M_Z^D$ to a sub-representation of $M_Y^C$, which completes the proof.
\end{proof}

We now prove our main CCR result.  

\begin{thm}  \label{mainCCRthm}
	Let $G$ be a second-countable locally compact Hausdorff groupoid with a Haar system. If	$C^{*}(G)$ is CCR then $\gus/G$ is $T_{1}$ and the stabilizers of $G$ are CCR. 	
\end{thm}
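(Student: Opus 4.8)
The plan is to prove the two conclusions separately, with the $T_1$ claim as the crux and the CCR-of-stabilizers claim following by Clark's original argument. Recall that $\gus/G$ is $T_1$ exactly when every orbit $[u]$ is closed in $\gus$. So the heart of the matter is to show: \emph{if $C^*(G)$ is CCR, then every orbit is closed.} I would argue by contrapositive. Suppose some orbit fails to be closed, so there is a sequence $\{u_n\}\subset[u]$ with $u_n\to v$ and $v\notin[u]$. The strategy is to derive a contradiction by playing two descriptions of the induced representations against each other: the multiplication-operator picture of Proposition \ref{mainEquivReps} and the kernel-containment of Lemma \ref{convInducedTrivialReps}.

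Here is the mechanism in more detail. By Lemma \ref{convInducedTrivialReps}, passing to a subsequence we obtain a closed subgroup $C\subset\gud{v}{v}$ with $\gud{u_n}{u_n}\to C$ in the Fell topology and $\ker(l^{u_n})\subset\ker(l^C)$ for every $n$. Fix one index and write $\pi:=l^{u_n}$, which is irreducible since it is induced from the stabilizer $\gud{u_n}{u_n}$. The representation $l^C$ is induced from the trivial representation of the (possibly non-stabilizer) subgroup $C\subset\gud{v}{v}$; in general it need not be irreducible, but its kernel contains $\ker\pi$. The plan is to decompose or restrict $l^C$ to extract an irreducible piece $\tau$ with $\ker\pi\subseteq\ker\tau$; since $C^*(G)$ is CCR, Lemma \ref{ccrrepcontainiseqv} then forces $\pi$ and $\tau$ to be \emph{unitarily equivalent}. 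Now I would invoke Proposition \ref{mainEquivReps}: up to the unitary $P$ of that proposition (after tensoring with $V$), $\pi=l^{u_n}=L_{u_n}\circ V$ is equivalent to the multiplication representation $M_{u_n}$ of $C_0(\gus)$ on $L^2\big(G_{u_n}/\gud{u_n}{u_n},\fH{},\sigma_{u_n}\big)$, whose ``spatial support'' sits over the orbit $[u_n]=[u]$ via the range map; similarly the irreducible piece of $l^C$ lives over the orbit $[v]$. An equivalence of the two representations of $C^*(G)$ restricts to an equivalence of the associated representations of $C_0(\gus)\cong C^*(\gus)$ sitting in $\mathcal{M}(C^*(G))$, hence to unitarily equivalent sub-representations of $M_Y$ and $M_Z$ in the notation of Lemma \ref{lem:EqReps}, where $i(Y)\subseteq\overline{[u]}$ and $j(Z)\subseteq[v]$.

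At this point Lemma \ref{lem:EqReps} supplies the contradiction. Because $v\notin[u]$ and orbits are disjoint, the images $i(Y)$ and $j(Z)$ in $\gus$ are disjoint (one needs to verify, via the homeomorphism-type identification of $G_w/\gud{w}{w}$ with the orbit through the range map and a Urysohn separation, that the relevant base sets can be taken disjoint once $v\notin[u]$). Lemma \ref{lem:EqReps} then asserts that $M_Y$ and $M_Z$ have \emph{no} unitarily equivalent sub-representations, contradicting the equivalence produced above. Hence no orbit can fail to be closed, so $\gus/G$ is $T_1$. I would expect the main obstacle to be precisely the bookkeeping in passing from the groupoid-level equivalence of $\pi\cong\tau$ to the hypotheses of Lemma \ref{lem:EqReps}: one must match up $C_0(\gus)$ acting through $V$ inside the multiplier algebra with the diagonal multiplication operators $M_Y,M_Z$, ensure the measures and base spaces are identified correctly, and confirm that disjointness of orbits yields $i(Y)\cap j(Z)=\emptyset$. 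Extracting an irreducible sub-representation $\tau$ of the possibly-reducible $l^C$ with the kernel still containing $\ker\pi$ is the other delicate point, since CCR-ness is what lets Lemma \ref{ccrrepcontainiseqv} convert kernel containment into equivalence.

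Finally, for the second conclusion I would simply appeal to the structure already flagged in the introduction: once $\gus/G$ is shown to be $T_1$ (equivalently, the orbits are closed), the claim that the stabilizers $\gud{u}{u}$ are CCR follows verbatim from Clark's argument, which derives statement (ii) under the standing assumption that the orbits are closed and \emph{without} using the amenability-dependent continuity result. Thus no new work beyond citing \cite{Cla07} is needed for the stabilizer statement.
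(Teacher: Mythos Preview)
Your overall architecture matches the paper's: contradict a non-closed orbit by producing, via CCR-ness and Lemma~\ref{ccrrepcontainiseqv}, a unitary equivalence between an irreducible induced representation supported over $[u]$ and one supported over $[v]$, and then invoke Proposition~\ref{mainEquivReps} and Lemma~\ref{lem:EqReps} to derive a contradiction. The second conclusion (CCR stabilizers via Clark's argument once $T_1$ is established) is handled exactly as in the paper.

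There is, however, one genuine gap. You propose to ``decompose or restrict $l^C$ to extract an irreducible piece $\tau$ with $\ker\pi\subseteq\ker\tau$'' and then assert that ``the irreducible piece of $l^C$ lives over the orbit $[v]$.'' The problem is that Proposition~\ref{mainEquivReps} only applies to representations of the specific form $\ind{\gud{w}{w}}{G}{\pi_\omega}$ with $\omega$ an irreducible representation of a \emph{stabilizer} $\gud{w}{w}$. An arbitrary irreducible sub- or quotient representation $\tau$ of $l^C$, or an arbitrary irreducible with $\ker(l^C)\subset\ker\tau$, is not a priori of this form, so you cannot directly conclude that $\tau$ corresponds to a multiplication representation $M_v$ supported on $[v]$.

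The paper resolves this by a two-step manoeuvre. First, induction in stages (\cite[Theorem~4]{IonWil09}) gives $l^C\sim\ind{\gud{v}{v}}{G}{\bigl(\ind{C}{\gud{v}{v}}{1}\bigr)}$. Second, working \emph{inside} $C^*(\gud{v}{v})$, one chooses a primitive ideal $P\supset\ker\bigl(\ind{C}{\gud{v}{v}}{1}\bigr)$ and writes $P=\ker\pi$ for some $\pi\in C^*(\gud{v}{v})^\wedge$. Inducing $\pi$ back up to $G$ yields an irreducible representation $\ind{\gud{v}{v}}{G}{\pi}$ that is \emph{by construction} induced from the stabilizer at $v$, so Proposition~\ref{mainEquivReps} applies to it and produces $M_v$ supported on $[v]$. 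The chain of kernel inclusions
\[
\ker(l^u)\subset\ker(l^C)=\ker\bigl(\ind{\gud{v}{v}}{G}{(\ind{C}{\gud{v}{v}}{1})}\bigr)\subset\ker\bigl(\ind{\gud{v}{v}}{G}{\pi}\bigr)
\]
then feeds into Lemma~\ref{ccrrepcontainiseqv} exactly as you intended. In short, the missing idea is to drop down to the stabilizer $\gud{v}{v}$, extract the irreducible there, and induce back up, rather than trying to extract an irreducible directly at the level of $C^*(G)$.
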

 	
\begin{proof}
	Assume $C^*(G)$ is a CCR $C^*$-algebra. We first show that the orbit space is $T_1$. To do this it will suffice to show that orbits are closed in 
  	$\gus$. 
  	We prove the orbits are closed by contradiction.
  	Suppose that $C^{*}(G)$ is CCR and that there is a $u\in \gus$ such that 
 	$[u]$ is not closed in $\gus/G$. Let $v\in\overline{[u]}\backslash [u]$. Then there 
 	is a sequence $\{u_{n}\}\subset [u]$ such that $u_{n}\to v$. We will show that
 	the trivial representation of $\gudu$ induced up to $C^*(G)$ is equivalent to 
 	an irreducible representation of $\gud{v}{v}$ induced up to $C^*(G)$. By using Lemma \ref{lem:EqReps} this equivalence then leads to a contradiction, so that $[u]$ must be closed.
 	
 	Since $\{u_{n}\}\subset [u]$ and $u_{n}\to v$ in $\gus$,
 	we have by Lemma \ref{convInducedTrivialReps}
 	that there exists a closed subgroup $C\subset\gud{v}{v}$ such that 
 	$\ker(l^{u_{n}})\subset \ker(l^C)$ for all $n$.
 	By \cite[Lemma 5.1]{Cla07} the map $[u]\mapsto [l^u]$ from $\gus$
 	into the spectrum $C^{*}(G)^{\wedge}$ of $C^*(G)$ is well defined. 
 	Also, since $\{u_{n}\}\subset [u]$, we have 
 	$l^{u_{n}}\sim l^u$ for every $n$.
 	Thus we have 
 	\begin{equation} \label{CCRmainContainement1}
 		\ker(l^{u}) \subset \ker(l^C).
 	\end{equation}
 	
 	We claim that there is a $\pi\in C^*(\gud{v}{v})^{\wedge}$ such that 
 	\begin{equation}\label{CCRmainContainement2}
 	\ker(\ind{C}{\gud{v}{v}}{1}) \subset \ker(\pi).
 	\end{equation}
 	To see this, note that if $\ind{C}{\gud{v}{v}}{1}$ is irreducible then
 	the claim follows immediately by taking
 	$\pi=\ind{C}{\gud{v}{v}}{1}$. If $\ind{C}{\gud{v}{v}}{1}$ is not irreducible
 	then $  \ker(\ind{C}{\gud{v}{v}}{1})$ is a closed two-sided ideal in 
 	$C^*(\gud{v}{v})$. By Proposition A.17 in \cite{MoritaEq} every 
 	ideal in a $C^*$-algebra is equal to the intersection of all 
 	the primitive ideals containing it. Thus 
 	$$\ker(\ind{C}{\gud{v}{v}}{1})=\cap\{P\in\Prim(C^*(\gud{v}{v})): 
 		\ker(\ind{C}{\gud{v}{v}}{1})\subset P\},  $$
 	and so there is a $ P\in\Prim(C^*(\gud{v}{v}))$ such that 
 	$\ker(\ind{C}{\gud{v}{v}}{1})\subset P$. Since $P$ is a primitive 
 	ideal, it follows that $P=\ker(\pi)$ with $\pi\in C^*(\gud{v}{v})^{\wedge}$,
 	proving the claim. 
 	
 	By \cite[Theorem 4]{IonWil09} we may induce representations in stages. That is, 
 	since we have the inclusion of (sub)groupoids $C\subset\gud{v}{v}\subset G$, 
 	the representations $ \ind{\gud{v}{v}}{G}{(\ind{C}{\gud{v}{v}}{1})}$ and
 	$l^C=\ind{C}{G}{1}$ are equivalent. Then 
 	\begin{equation}\label{CCRmainContainement3}
 	\ker(l^C)=\ker(\ind{\gud{v}{v}}{G}{(\ind{C}{\gud{v}{v}}{1}})).
 	\end{equation}
 	
 	Combining the inclusions from (\ref{CCRmainContainement1}),
 	(\ref{CCRmainContainement3}) and (\ref{CCRmainContainement2}) we have
 	\begin{equation}
 	\ker(l^{u}) \subset \ker(l^C)
 				=\ker(\ind{\gud{v}{v}}{G}{(\ind{C}{\gud{v}{v}}{1})})
 				\subset \ker(\ind{\gud{v}{v}}{G}{\pi}).
 	\end{equation}
 	Now, since $l^{u}$ and  $\ind{\gud{v}{v}}{G}{\pi}$ are irreducible representations
 	of $C^*(G)$ (\cite[Theorem 5]{IonWil09}) and  $\ker(l^{u})\subset  \ker(\ind{\gud{v}{v}}{G}{\pi})$, it 
 	follows from Lemma \ref{ccrrepcontainiseqv} that 
 	$l^{u}\sim\ind{\gud{v}{v}}{G}{\pi}$. Let $L_u$ and $L_v$ denote
 	the extensions of $l^u$ and $\ind{\gud{v}{v}}{G}{\pi}$,
 	respectively, to the multiplier algebra $\mathcal{M}(C^*(G))$ of $C^*(G)$.
 	 Then $l^{u}\sim\ind{\gud{v}{v}}{G}{\pi}$ implies that 
 	$L_u\sim L_v$. 
  	On the other hand, by Proposition
 	\ref{mainEquivReps},  we have that $L_u\circ V$ 
 	is unitarily equivalent to the representation $M_u$ of $C_{0}(\gus)$ 
 	by multiplication operators on $L^2(G_u /\gudu,\fH{u},\sigma_u)$. Similarly, for the extension $L_v$ 
 	of $\ind{\gud{v}{v}}{G}{\pi}$ to $\mathcal{M}(C^*(G))$ we have that 
 	$L_v\circ V$ 
 	is unitarily equivalent to a representation $M_v$ of $C_{0}(\gus)$ 
 	as multiplication operators on $L^2(G_v /\gud{v}{v},\fH{v},\sigma_v)$. Then 
 	it follows that 
 	$$ M_u\sim L_u\circ V\sim L_v\circ V\sim M_v.$$
 	However, since $[u]\cap [v]=\emptyset$ 
 	the equivalence $ M_u\sim  M_v$ contradicts Lemma \ref{lem:EqReps}.
 	Hence $[u]$ is closed in $\gus/G$, showing that the 
 	orbit space $\gus/G$ is $T_1$.
 	
 	Lastly, we need to show that the stabilizers of $G$ are CCR. However, since the orbits of $G$ are closed by the proof above, exactly the same argument as in \cite[Theorem 6.1]{Cla07} shows that the stabilizers of $G$ are CCR.
 \end{proof}

We can combine Theorem \ref{mainCCRthm} and Clark's Theorem 6.1 in \cite{Cla07} to formulate an improved characterization of CCR groupoids $C^*$-algebras without amenability:

\begin{thm} \label{mainCCRthm2}
		Let $G$ be a second-countable, locally compact and Hausdorff groupoid
		with a Haar system. Then $C^*(G)$ is 
		CCR if and only if the stabilizers of $G$ are CCR 
		and $\gus/G$ is $T_{1}$.
\end{thm}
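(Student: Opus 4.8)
The plan is to prove the biconditional by treating each implication separately, taking the forward direction directly from Theorem~\ref{mainCCRthm} and extracting the converse from Clark's Theorem~6.1 in \cite{Cla07} with the amenability hypothesis stripped away.

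First I would dispose of the forward implication. If $C^*(G)$ is CCR, then Theorem~\ref{mainCCRthm} yields immediately that $\gus/G$ is $T_1$ and that the stabilizers of $G$ are CCR. This direction requires nothing beyond citing the main result of the previous section.

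For the converse I would assume that $\gus/G$ is $T_1$ and that all stabilizers of $G$ are CCR, and deduce that $C^*(G)$ is CCR by appealing to the proof of Clark's Theorem~6.1. The crucial observation, already flagged in the introduction, is that amenability enters Clark's argument only through the continuity of the map $\gus/G\to C^*(G)^{\wedge}$ established in \cite[Lemma~5.4]{Cla07}, and this continuity is invoked \emph{only} in the forward half of his equivalence---to force a $T_1$ orbit space and CCR stabilizers out of CCR-ness of $C^*(G)$. The converse half of Clark's proof, which produces CCR-ness of $C^*(G)$ from a $T_1$ orbit space together with CCR stabilizers, makes no use of amenability whatsoever. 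I would therefore cite that half of Clark's argument verbatim to conclude that $C^*(G)$ is CCR, completing the equivalence.

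The main obstacle is verifying that amenability is genuinely confined to the forward direction of Clark's theorem, so that the converse implication survives without it. This amounts to a careful audit of \cite{Cla07}, checking that every appeal to the amenability hypothesis sits on the forward side of the equivalence and that the proof of the backward implication is self-contained. Since this reading has already been carried out and recorded in the introduction, no additional analytic work is needed, and the two implications together establish the stated characterization.
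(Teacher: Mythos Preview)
Your proposal is correct and matches the paper's own treatment: the paper states Theorem~\ref{mainCCRthm2} simply as the combination of Theorem~\ref{mainCCRthm} with Clark's Theorem~6.1, relying on the observation (made in the introduction) that amenability is used only in the forward direction of Clark's argument. No additional argument is supplied in the paper beyond this combination, so your approach is essentially identical.
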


\section{EXAMPLES} \label{chap:Examples}
We give two simple examples of groupoids that illustrate the 
improved GCR (\cite[Theorem 5.3]{Wyk18}) and CCR characterizations of groupoid $C^*$-algebras. The first example is a groupoid with non-amenable stabilizers, which is GCR but not CCR. The second example is a group-bundle groupoid 
with non-amenable stabilizers which is CCR.

\begin{itemize}[leftmargin=*]

\item[{(1)}] \label{ex:transfGroupExGCR}

Fix any $n\geq 2$. Consider the group $\GL_n(\mathbb{R})$ of all $n\times n$ invertible matrices with real entries.  The map 
from $\GL_n(\mathbb{R})$ to $\mathbb{R}^{n^2}$ given by 
$$(a_{ij})\mapsto (a_{11},a_{12},\ldots, a_{nn})$$
%
is a bijection. Via this bijection we give  $\GL_n(\mathbb{R})$ the topology obtained from the product topology on $\mathbb{R}^{n^2}$.
Then $\GL_n(\mathbb{R})$ is a second-countable, locally 
compact and Hausdorff group. Let $A\in\GL_n(\mathbb{R})$ 
and $x\in \mathbb{R}^{n}$. Then 
$$ A\cdot x := \frac{1}{\det(A)}\:x$$
defines an action of 
$\GL_n(\mathbb{R})$ on $\mathbb{R}^{n}$. 
Since the $\det$ is a continuous function and 
scalar multiplication is continuous in $\mathbb{R}^{n}$, it follows that this
action is continuous. Hence we have a second-countable, 
locally compact and Hausdorff transformation
group $(\GL_n(\mathbb{R}),\mathbb{R}^{n})$.
Then $(\GL_n(\mathbb{R}),\mathbb{R}^{n})$ can be identified with a groupoid $G:= \GL_n(\mathbb{R})\times \mathbb{R}^n$ such that the orbits, orbit space and stabilizers correspond to their transformation group analogues
(see Example 3.3 in \cite{HueCla08} for the details).
Suppose that $\beta$ is a Haar measure on $\GL_n(\mathbb{R})$ and for
every $x\in\mathbb{R}^{n}$ let $\delta_{x}$ denote the point mass measure. Then 
$\{\beta\times \delta_{x}\}_{x\in\mathbb{R}^{n}}$  is a Haar system 
for $G$.

Consider the stabilizers. Let $x\in \mathbb{R}^n\backslash\{0\}$
and let $\SL_n(\mathbb{R})$ be the subgroup of  $\GL_n(\mathbb{R})$
of all $n\times n$ matrices with determinant $1$ (i.e. the special linear group).
Then the stabilizer at $x$ is 
$$ \gud{x}{x}= \{A\in \GL_3(\mathbb{R}): A\cdot x=x\}
=\{A\in \GL_3(\mathbb{R}): \det(A)=1 \}= \SL_3(\mathbb{R}).$$
The stabilizer at $0$ is 
$$ \gud{0}{0}= \{A\in \GL_3(\mathbb{R}): A\cdot0=0\}
= \GL_3(\mathbb{R}).$$  
Note that all the stabilizers are non-amenable 
(see for example Ex 1.2.6 on p.28 in \cite{RundeLecAmen}), and they do 
not vary continuously (there is a discontinuity at $0$). 
The stabilizers are CCR (\cite{Har54}), 
and thus also GCR. 

We show that the orbit space is $T_0$, but not $T_1$.
Suppose that $x\in \mathbb{R}^n$. The orbit 
of $x$ is given by 
$$[x]:=\{A\cdot x: A\in \GL_n(\mathbb{R})\} = 
\{cx: c\in\mathbb{R}, c\neq0  \}. $$
With the exception of $0$, each orbit is a straight line in 
$\mathbb{R}^n$ through the 
origin, but with the origin excluded (since $\det(A)\neq0$). 
Thus these orbits are not closed as subset of $\mathbb{R}^n$. That is, 
the orbit space cannot be $T_1$. Since the orbit space is not $T_1$, 
Theorem \ref{mainCCRthm}
implies that $C^{*}(G)$ is not CCR (Note: since the 
stabilizers are not amenable, we cannot draw this conclusion 
from either \cite[Theorem 6.1]{Cla07} nor \cite[Proposition 4.17]{Wil81}.)

Since the orbits are open in their closures, it follows that 
the orbits are locally closed (\cite[Lemma 1.25]{CrossedProd}). 
Then the equivalence relation
$R(\gamma):=(r(\gamma),s(\gamma)),\gamma\in G,$ on the unit space is an $F_{\sigma}$ set (\cite[Lemma 5.1]{Wyk18}), and it follows from 
\cite[Theorem 2.1]{Ram90} that the orbit space is $T_0$. Thus, because
the stabilizers of $G$ are GCR and the orbit space is 
$T_0$, it follows from Theorem 5.3 in \cite{Wyk18} that $C^*(G)$ is GCR. 
\\

\item[{(2)}] \label{ex:CCRgroupBundle}
The next example is a CCR group-bundle groupoid. Suppose that $\{G_{i}\}_{i\in\mathbb{I}}$ is a family of groups indexed by a countable set
$\mathbb{I}$. Let $G:=\{(i,t):i\in\mathbb{I}, t\in G_i\}$. Then $G$ is a groupoid, where $(i,s),(j,t)\in G$ are composable if and only if $i=j$, and then $ (i,s)(i,t):=(i,st)$ and $(i,t)^{-1}:=(i,t^{-1})$. Then $G$ is called a group-bundle groupoid. 
Suppose that $G$
is second-countable, locally compact and Hausdorff with a Haar system. 
Let  $e_i$ denote the identity of the group $G_i$. 
Then $r(i,t)=s(i,t)=(i,e_i)$. Also, the unit space $\gus$ is 
homeomorphic to the orbit space $\gus/G$ and since the unit space is Hausdorff the 
orbit space is also Hausdorff. Hence the orbit space is always $T_{0}$ and $T_{1}$.
Thus we have following corollary to Theorem \ref{mainCCRthm2}:

\begin{cor}\label{bundleResult}
	Suppose that $G=\sqcup_{i\in\mathbb{I}}G_i$ is a second-countable 
	locally compact Hausdorff group-bundle 
	groupoid with a Haar system. Then $C^*(G)$ is CCR  if and only if 
	$C^*(G_{i})$ is CCR for every $i\in\mathbb{I}$.  
\end{cor}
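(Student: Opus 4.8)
The plan is to deduce this directly from Theorem \ref{mainCCRthm2}, whose hypotheses for a group-bundle groupoid collapse to a single fibrewise condition. Theorem \ref{mainCCRthm2} asserts that $C^*(G)$ is CCR precisely when both (a) $\gus/G$ is $T_1$ and (b) every stabilizer of $G$ is CCR, so I only need to analyse these two conditions for $G=\sqcup_{i\in\mathbb{I}}G_i$.

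First I would dispose of condition (a). As recorded in the discussion preceding the corollary, in a group-bundle groupoid the range and source maps coincide, $r(i,t)=s(i,t)=(i,e_i)$, so each orbit is a single point and the orbit space $\gus/G$ is homeomorphic to the unit space $\gus$. Since $\gus$ is Hausdorff, $\gus/G$ is Hausdorff and in particular $T_1$; thus condition (a) holds automatically and contributes nothing to the equivalence we are after.

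Second I would identify the stabilizers appearing in condition (b). The only unit lying over the $i$-th fibre is $(i,e_i)$, and
$$\gud{(i,e_i)}{(i,e_i)}=r^{-1}(i,e_i)\cap s^{-1}(i,e_i)=\{(i,t):t\in G_i\}.$$
This subgroupoid is a topological group isomorphic to $G_i$, with groupoid product $(i,s)(i,t)=(i,st)$ matching the group law and with Haar measure that of $G_i$, so its group $C^*$-algebra is exactly $C^*(G_i)$. As $i$ ranges over $\mathbb{I}$ these exhaust all stabilizers of $G$, whence ``every stabilizer of $G$ is CCR'' is literally the statement that $C^*(G_i)$ is CCR for every $i\in\mathbb{I}$. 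Substituting these two observations into Theorem \ref{mainCCRthm2} then yields the claimed equivalence in both directions at once.

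I do not expect a genuine obstacle here: the argument is a direct specialisation of Theorem \ref{mainCCRthm2}, and the bulk of the work has already been done. The only point demanding a little care is the identification in the second step, namely verifying that $\{(i,t):t\in G_i\}$ inherits from $G$ precisely the topological group structure (and Haar measure) of $G_i$, so that the CCR-ness of the stabilizer $C^*$-algebra and of $C^*(G_i)$ really do coincide fibre by fibre.
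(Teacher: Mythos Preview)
Your proposal is correct and follows essentially the same approach as the paper: the discussion immediately preceding the corollary already records that $r=s$ on a group-bundle groupoid, so the orbit space is homeomorphic to the (Hausdorff) unit space and hence $T_1$, and the stabilizers are precisely the fibres $G_i$; the corollary is then read off from Theorem~\ref{mainCCRthm2} exactly as you do.
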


By Theorem 5.3 in \cite{Wyk18} the same statement holds for GCR groupoid $C^*$-algebras.

	For a specific example, fix $n\geq 2$ and take $G=\sqcup_{i}\:\textrm{SL}_n(\mathbb{R})$ with 
	the disjoint union topology (see Example 2.1.5 in \cite{Wyk17} for details on the disjoint union topology). Since the 
	stabilizers are all the same, it follows that the 
	stabilizer map is continuous. Thus by \cite[Lemmas 1.1 and 1.3 ]{Ren91}, $G$ has a Haar system $\{\lambda^n\}$, where each $\lambda^n$ is a	Haar measure of $SL_n(\mathbb{R})$ (see Example 2.1.5 in \cite{Wyk17} that the Haar system is indeed given by the Haar measure on $\textrm{SL}_n(\mathbb{R})$). 
	Every stabilizer $SL_n(\mathbb{R})$ is CCR 
	(\cite{Har54}). Thus $C^*(G)$ is CCR by Corollary \ref{bundleResult}. \\
\end{itemize}

\section*{ACKNOWLEDGEMENTS}
This paper is a product of work done in the author's doctoral thesis. I thank my advisors Astrid an Huef and Lisa Orloff Clark for their guidance, support and endless patience   throughout my studies. In particular, I thank Astrid for a carefully reading this paper and for valuable comments. I thank Dana Williams for bringing Lemmas \ref{lem:EqRepsPrequal} and \ref{lem:EqReps} to my attention.

\bibliographystyle{acm}

\end{document}